\newtheorem{theorem}{Theorem}[section]
\theoremstyle{plain}
\newtheorem{acknowledgement}{Acknowledgement}
\newtheorem{corollary}{Corollary}[section]
\newtheorem{example}{Example}[section]
\newtheorem{lemma}{Lemma}[section]
\newtheorem{proposition}{Proposition}[section]
\newtheorem{remark}{Remark}[section]
\numberwithin{equation}{section}
\begin{document}
\title[Engel sub-Lorentzian structures]{Normal forms for sub-Lorentzian
metrics supported on Engel type distributions.}
\author{Marek Grochowski}
\keywords{sub-Lorentzian manifolds, geodesics, reachable sets, geometric
optimality, Engel distributions.}
\email{m.grochowski@uksw.edu.pl}

\begin{abstract}
We construct normal forms for Lorentzian metrics on Engel distributions
under the assumption that abnormal curves are timelike future directed
Hamiltonian geodesics. Then we indicate some cases in which the abnormal
timelike future directed curve initiating at the origin is geometrically
optimal. We also give certain estimates for reachable sets from a point.
\end{abstract}

\maketitle

\section{Introduction}

\subsection{Preliminaries.}

In the series of papers \cite{g2}, \cite{g3}, \cite{g7} we studied (germs
of) contact sub-Lorentzian structures on $\mathbb{R}^{3}$. In turn, in the
series \cite{g8}, \cite{g9}, \cite{g10} some classes of non-contact
sub-Lorentzian structures on $\mathbb{R}^{3}$ were studied (in all cases the
underlying distribution is of rank $2$). The next reasonable step is to
study sub-Lorentzian structures again supported by rank $2$ distributions
but on $\mathbb{R}^{n}$, $n\geq 4$. In \ this paper we begin studies in this
direction, namely we examine the simplest such case, i.e. one supported by
the so-called Engel distribution. Before giving precise definition we will
first present basis notions and facts from the sub-Lorentzian geometry that
will be needed to state the results.

For all details and proofs the reader is referred to \cite{g6} (and to other
papers by the author; see also \cite{Vas}, \cite{markin}). Let $M$ be a
smooth manifold, and let $H$ be a smooth distribution on $M$ of constant
rank. For a point $q\in M$ and an integer $i$ let us define $H_{q}^{i}$ to
be the linear subspace in $T_{q}M$ generated by all vectors of the form $%
[X_{1},[X_{2},...,[X_{k-1},X_{k}]...]](q)$, where $X_{1},...,X_{k}$ are
smooth (local) sections of $H$ defined near $q$, and $k\leq i$. We say that $%
H$ is \textit{bracket generating} if for every $q\in M$ there exists a
positive integer $i=i(q)$ such that $H_{q}^{i}=T_{q}M$. Now, by \textit{a
sub-Lorentzian structure (or metric)} on $M$ we mean a pair $(H,g)$ made up
of a smooth bracket generating distribution $H$ of constant rank and a
smooth Lorentzian metric on $H$. A triple $(M,H,g)$ is called \textit{a
sub-Lorentzian manifold.}

Up to the end of this subsection we fix a sub-Lorentzian manifold $(M,H,g)$.
A vector $v\in H_{q}$ is called \textit{timelike} if $g(v,v)<0$, is called 
\textit{nonspacelike} if $g(v,v)\leq 0$ and $v\neq 0$, is \textit{null} if $%
g(v,v)=0$ and $v\neq 0$, finally is \textit{spacelike} if $g(v,v)>0$ or $v=0$%
. By \textit{a time orientation} of $(H,g)$ we mean a continuous timelike
vector field on $M$. Suppose that $X$ is a time orientation of $(M,H,g)$.
Then a nonspacelike $v\in H_{q}$ is said to be \textit{future directed} if $%
g(v,X(q))<0$, and is \textit{past directed} if $g(v,X(q))>0$. An absolutely
continuous curve $\gamma :[a,b]\longrightarrow M$ is called \textit{%
horizontal} if $\dot{\gamma}(t)\in H_{\gamma (t)}$ a.e. on $[a,b]$. A
horizontal curve is nonspacelike (resp. timelike, null, nonspacelike future
directed etc.) if so is $\dot{\gamma}(t)$ a.e.

Below we will need a notion of Hamiltonian geodesics. Let $\mathcal{H}%
:T^{\ast }M\longrightarrow \mathbb{R}$ be the so-called\textit{\ geodesic
(or metric) Hamiltonian} associated with our structure $(H,g)$. A global
definition of $\mathcal{H}$ is given for instance in \cite{g6}. Locally $%
\mathcal{H}$ looks as follows. Take an orthonormal basis $X_{0},...,X_{k}$
for $H$ defined on an open set $U\subset M$, where $X_{0}$ is timelike. Then
the restriction of $\mathcal{H}$ to $T^{\ast }U$ is given by $\mathcal{H}%
(q,p)=-\frac{1}{2}\left\langle p,X_{0}(q)\right\rangle ^{2}+\frac{1}{2}%
\tsum_{j=1}^{k}\left\langle p,X_{j}(q)\right\rangle ^{2}$. Denote by $%
\overrightarrow{\mathcal{H}}$ the Hamiltonian vector field corresponding to
the function $\mathcal{H}$. A horizontal curve is called \textit{a
Hamiltonian geodesic} if it can be represented in the form $\gamma (t)=\pi
\circ \lambda (t)$, where $\dot{\lambda}=\overrightarrow{\mathcal{H}}$ and $%
\pi :T^{\ast }M\longrightarrow M$ is the canonical projection. $\lambda (t)$
is called a Hamiltonian lift of $\gamma (t)$. It is immediate from the very
definition that if $\gamma :[a,b]\longrightarrow M$ is a Hamiltonian
geodesic and $\dot{\gamma}(t_{0})$ is a nonspacelike (resp. timelike, null,
nonspacelike future directed etc.) vector, then so is $\dot{\gamma}(t)$ for
every $t\in \lbrack a,b]$.

Before going further, it seems sensible to clarify why we use the word
'geodeisc'. So, first of all, if $\gamma :[a,b]\longrightarrow M$ is a
nonspacelike curve then we define its \textit{sub-Lorentzian length} by
formula 
\begin{equation*}
L(\gamma )=\int_{a}^{b}\sqrt{-g(\dot{\gamma}(t),\dot{\gamma}(t))}dt\text{.}
\end{equation*}%
Next, for an open subset $U\subset M$ and any pair of points $q_{1},q_{2}\in
U$, denote by $\Omega _{q_{1},q_{2}}^{nspc}(U)$ the set of all nonspacelike
future directed curves contained in $U$ and joining $q_{1}$ to $q_{2}$. Now
we say that a nonspacelike future directed curve $\gamma
:[a,b]\longrightarrow U$ is a \textit{maximizing} $U$\textit{-geodesic} or
simply a $U$\textit{-maximizer} if 
\begin{equation*}
L(\gamma )=\max \left\{ L(\eta ):\;\eta \in \Omega _{\gamma (a),\gamma
(b)}^{nspc}(U)\right\} \text{.}
\end{equation*}%
By a $U$\textit{-geodesic} we mean a curve in $U$ whose every sufficiently
small subarc is a $U$-maximizer (such an approach follows the ideas
elaborated in the Lorentzian case - see e.g. \cite{Beem}, \cite{ON} or \cite%
{Hawk}). It turns out \cite{g6} that for every nonspacelike Hamiltonian
geodesic $\gamma :[a,b]\longrightarrow M$ and for every $t\in (a,b)$ there
exists a neighbourhood $U$ of $\gamma (t)$ such that $U\cap \gamma $ is a $U$%
-maximizer. Note that in the Lorentzian (or Riemannian) geometry every
geodesic is Hamiltonian. It is known that in the sub-Lorentzian (or
sub-Riemannian) geometry there are maximizers (minimizers) that are not
Hamiltonian geodesics - see e.g. \cite{g3} and remark 1.1 below for examples
in the sub-Lorentzian case (and \cite{Suss}, \cite{Suss2} for the
sub-Riemannian situation).

Denote by $\Phi _{t}$ the (local) flow of the field $\overrightarrow{%
\mathcal{H}}$. For a fixed point $q_{0}\in M$ let us define $\mathcal{D}%
_{q_{0}}$ to be the set of all $\lambda \in T_{q_{0}}^{\ast }M$ such that
the curve $t\longrightarrow \Phi _{t}(\lambda )$ is defined on the whole
interval $[0,1]$. $\mathcal{D}_{q_{o}}$ is an open subset in $%
T_{q_{0}}^{\ast }M$. Now we define \textit{the exponential mapping} with the
pole at $q_{0}$ 
\begin{equation*}
\exp _{q_{0}}:\mathcal{D}_{q_{0}}\longrightarrow M\text{, \ \ }\exp
_{q_{0}}(\lambda )=\pi \circ \Phi _{1}(\lambda )\text{.}
\end{equation*}
Using properties of Hamiltonian equations it is easy to see that the
Hamiltonian geodesic with initial conditions $(q_{0},\lambda )$ can be
written as $\gamma (t)=\exp _{q_{0}}(t\lambda )$. It can also be observed
that if $\gamma (t)$ is a Hamiltonian geodesic with a Hamiltonian lift $%
\lambda (t)=\Phi _{t}(\lambda )$ then, from the definition of the geodesic
Hamiltonian (see \cite{g6} for more details), it follows that for any $v\in
H_{\gamma (t)}$ we have

\begin{equation}
g(\dot{\gamma}(t),v)=\left\langle \Phi _{t}(\lambda ),v\right\rangle \text{.}
\label{il}
\end{equation}

At the end let us recall the notion of abnormal curves (cf. e.g. \cite{Suss}%
). So an absolutely continuous curve $\lambda :[a,b]\longrightarrow T^{\ast
}M$ is called an abnormal biextremal if $\lambda ([a,b])\subset H^{\bot }$, $%
\lambda $ never intersects the zero section, and moreover $\Omega _{\lambda
(t)}(\dot{\lambda}(t),\zeta )=0$ for almost every $t\in \lbrack a,b]$ and
every $\zeta \in T_{\lambda (t)}H^{\bot }$; here $H^{\bot }$ is the
annihilator of $H$, and $\Omega $ denotes the restriction to $H^{\bot }$ of
the standard symplectic form on $T^{\ast }M$. A horizontal curve $\gamma
:[a,b]\longrightarrow M$ is said to be \textit{abnormal} if there exists an
abnormal biextremal $\lambda :[a,b]\longrightarrow T^{\ast }M$ such that $%
\gamma =\pi \circ \lambda $.

Throughout the paper we will use the following abbreviations:
\textquotedblright t.\textquotedblright\ for \textquotedblright
timelike\textquotedblright , \textquotedblright nspc.\textquotedblright\ for
\textquotedblright nonspacelike\textquotedblright , and \textquotedblright
f.d.\textquotedblright\ for \textquotedblright future
directed\textquotedblright . Moreover, unless otherwise stated, we assume
all curves and vectors to be horizontal. Thus e.g. a t.f.d. curve is a
horizontal curve whose tangent is t.f.d. a.e.

\subsection{Statement of the results.}

Let $H$ be a $\emph{rank}$ $2$ distribution of constant rank on a $4$%
-dimensional manifold $M$. We say that $H$ is an \textit{Engel (or Engel
type) distribution} if $H^{2}$ is of constant rank $3$, and $H^{3}$ is of
constant rank $4$, i.e. $H^{3}=TM$. The remarkable property of Engel
distributions is the fact that they are topologically stable, see e.g. \cite%
{Montg} (note that apart from Engel case, the only stable distributions are
rank $1$ distributions, and also contact and pseudo-contact distributions).\
On the other hand, if one slightly perturbs any given rank $2$ distribution
on a $4$-manifold it becomes Engel on an open and dense subset. All this
gives rise to the importance of Engel distributions. But Engel distributions
are important also because of another reason, namely they appear in
applications. For instance our flat case (see example below) serves as a
model for a motion of a car with a single trailer (cf. e.g. \cite{Gauth}).

Using for instance \cite{Suss} one makes sure that if $H$ is an Engel
distribution on $M$ then through each point $q\in M$ there passes exactly
one unparameterized abnormal curve. Moreover the abnormal curves are all (at
least locally) trajectories of a single smooth vector field.

Let $H$ be an Engel type distribution and let $g$ be a Lorentzian metric on $%
H$. A couple $(H,g)$ is called an \textit{Engel sub-Lorentzian structure (or
metric)} if the abnormal curves for $H$ are timelike. If moreover the
abnormal curves are, possibly after reparameterization, t.f.d. Hamiltonian
geodesics then $(H,g)$ will be called \textit{Engel sub-Lorentzian structure
of Hamiltonian type}.

\begin{example}
As a model example of an Engel sub-Lorentzian structure of Hamiltonian type
we use the following one. Let $H=Span\left\{ X,Y\right\} $ with $X=\frac{%
\partial }{\partial x}+\frac{1}{2}y\frac{\partial }{\partial z}+\frac{1}{2}%
y^{2}\frac{\partial }{\partial w}$, $Y=\frac{\partial }{\partial y}-\frac{1}{%
2}x\frac{\partial }{\partial z}-\frac{1}{2}xy\frac{\partial }{\partial w}$.
Clearly $\left[ X,Y\right] =-\frac{\partial }{\partial z}-\frac{3}{2}y\frac{%
\partial }{\partial w}$, $\left[ X,\left[ X,Y\right] \right] =0$, $\left[ Y,%
\left[ X,Y\right] \right] =-\frac{3}{2}\frac{\partial }{\partial w}$, so
indeed $H^{2}$ is of constant rank $3$ while $H^{3}$ is of constant rank $4$%
. The trajectories of $X$ are the curves $t\longrightarrow
(x_{0}+t,y_{0},z_{0}+\frac{1}{2}y_{0}t,w_{0}+\frac{1}{2}y_{0}^{2}t)$, and
these curve are easily checked to be abnormal. Now we define a metric by
declaring $X$, $Y$ to be an orthonormal basis with a time orientation $X$,
and we make sure that a curve $t\longrightarrow (x_{0}+t,y_{0},z_{0}+\frac{1%
}{2}y_{0}t,w_{0}+\frac{1}{2}y_{0}^{2}t,-1,0,0,0)$ represents a Hamiltonian
lift for the corresponding trajectory of $X$.
\end{example}

The structure just described will be called \textit{the flat Engel
sub-Lorentzian structure}. The reason for such a name is the same as in the
previous papers by the author (see for instance \cite{g8}, \cite{g9}) - any
Engel structure of Hamiltonian type may be viewed as a perturbation of the
flat structure.

\begin{remark}
It is easy to construct Engel sub-Lorentzian structures $(H,g)$ which are
not of Hamiltonian type. The idea may be taken from Sussmann who gives in 
\cite{Suss2} a simple recipe how to contract Riemannian metrics on Engel
distributions with respect to which abnormal curves are strictly abnormal,
i.e. are not Hamiltonian geodesics. The construction goes without any
changes in the case of Lorentzian metrics. So it is enough to find two
fields $V$, $W$ spanning $H$ such that (i) $V\wedge W\wedge \left[ V,W\right]
\wedge \left[ W,\left[ V,W\right] \right] \neq 0$, (ii) $\left[ V,\left[ V,W%
\right] \right] =fV+gW+h\left[ V,W\right] $, with $f,g,h$ being smooth
functions such that $f$ vanishes nowhere. Now define a Lorentzian metric on $%
H$ by declaring $V,W$ to be an orthonormal basis with time orientation $V$.
The abnormal curves for just defined structure $(H,g)$ (which all are
trajectories of $V$) are not Hamiltonian geodesics (For the convenience of
the reader we present the argument. Suppose that a trajectory $\gamma $ of $%
V $ is a t.f.d. Hamiltonian geodesic. Let $\lambda (t)$ be its Hamiltonian
lift; then by (\ref{il}) it follows that $\left\langle \lambda
(t),V\right\rangle =-1$ and $\left\langle \lambda (t),W\right\rangle =0$ for
every $t$. Now, the successive differentiations of the second equation give: 
$\left\langle \lambda (t),\left[ V,W\right] \right\rangle =0$, and $%
0=\left\langle \lambda (t),\left[ V,\left[ V,W\right] \right] \right\rangle
=f\left\langle \lambda (t),V\right\rangle +g\left\langle \lambda
(t),W\right\rangle +h\left\langle \lambda (t),\left[ V,W\right]
\right\rangle =-f$ which is a contradiction with the assumption imposed on $%
f $).
\end{remark}

The main objective of this paper is to prove the following normal form
theorem (cf. \cite{g2}, \cite{g8}, \cite{g9}, and also \cite{Agr3}).

\begin{theorem}
Let $(H,g)$ be a smooth time-oriented Engel sub-Lorentzian structure of
Hamiltonian type defined in a neighbourhood of a point $q_{0}$ on a $4$%
-manifold. Then there are coordinates $x,y,z,w$ around $q_{0}$, $%
x(q_{0})=...=w(q_{0})=0$, in which $(H,g)$ has an orthonormal frame in the
normal form 
\begin{equation}
\left. 
\begin{array}{c}
X=\dfrac{\partial }{\partial x}+y\varphi \left( y\dfrac{\partial }{\partial x%
}+x\dfrac{\partial }{\partial y}\right) +\frac{1}{2}y(1+\psi _{1})\dfrac{%
\partial }{\partial z}+\frac{1}{2}y^{2}(1+\psi _{2})\dfrac{\partial }{%
\partial w} \\ 
Y=\dfrac{\partial }{\partial y}-x\varphi \left( y\dfrac{\partial }{\partial x%
}+x\dfrac{\partial }{\partial y}\right) -\frac{1}{2}x(1+\psi _{1})\dfrac{%
\partial }{\partial z}-\frac{1}{2}xy(1+\psi _{2})\dfrac{\partial }{\partial w%
}%
\end{array}%
\right.  \label{NF}
\end{equation}%
where $\varphi $, $\psi _{1}$, $\psi _{2}$ are smooth functions satisfying $%
\psi _{1}(0,0,z,w)=\psi _{2}(0,0,0,w)=0$, and $X$ is a time orientation
whose trajectories contained in $\left\{ y=0\right\} $ are abnormal curves
for $H$.
\end{theorem}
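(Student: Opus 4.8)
The plan is to follow the scheme underlying the analogous normal form results in \cite{g2}, \cite{g8}, \cite{g9}: first single out an orthonormal frame canonically attached to the structure, then straighten the relevant geometric data by a change of coordinates, and finally normalise the remaining coefficients by a finite chain of further coordinate changes, each preserving what has already been arranged. The distinguished object here is the abnormal line field. Since $H$ is Engel, through each point there passes exactly one abnormal curve and these curves are the integral curves of a smooth line field $\mathcal{L}\subset H$; by hypothesis $\mathcal{L}$ is timelike. Let $X_{0}$ be the future directed $g$-unit section of $\mathcal{L}$ and $X_{1}$ a $g$-unit spacelike section with $g(X_{0},X_{1})=0$. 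Then $(X_{0},X_{1})$ is attached to $(H,g)$ canonically, up to the sign of $X_{1}$, and the integral curves of $X_{0}$ are, up to parametrisation, the abnormal curves; in particular no frame ``boost'' freedom remains, and all the normalisation will be carried out by diffeomorphisms of the base.

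First I would straighten the Engel flag together with the abnormal foliation. Fixing a hypersurface $N$ through $q_{0}$ transverse to $\mathcal{L}$, together with coordinates on $N$ adapted at $q_{0}$ to $H\subset H^{2}\subset H^{3}=TM$, one obtains coordinates $x,y,z,w$ centred at $q_{0}$ in which the abnormal curve through $q_{0}$ is the $x$-axis, $X_{0}=\partial/\partial x$ along it, and, at the origin, $H=\left\langle \partial _{x},\partial _{y}\right\rangle $, $H^{2}=\left\langle \partial _{x},\partial _{y},\partial _{z}\right\rangle $, $H^{3}=TM$. Using that the abnormal curves foliate a neighbourhood, one may in addition take the hyperplane $\{y=0\}$ to be a union of integral curves of $\mathcal{L}$, so that $X_{0}=\partial _{x}$ everywhere on $\{y=0\}$ and the integral curves of $\partial _{x}$ lying in $\{y=0\}$ are abnormal --- this already yields the last clause of the theorem. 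Then a change of the $x,y$ variables, preserving $\{y=0\}$ and the above normalisations at the origin, puts the $\partial _{x},\partial _{y}$-components of $X_{0},X_{1}$ into the shape of (\ref{NF}); after this step these components depend on a single function $\varphi $, the corresponding reduction being reflected in the identity $xX_{0}+yX_{1}=x\partial _{x}+y\partial _{y}$.

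It remains to normalise the $\partial _{z}$- and $\partial _{w}$-components, and here the \emph{Hamiltonian type} hypothesis becomes essential. By the Engel rank conditions the leading coefficients of $X_{0},X_{1}$ in the $\partial _{z}$ (resp.\ $\partial _{w}$) directions are units along the relevant loci, so that consecutive changes $z\mapsto z+h(x,y,z,w)$ and then $w\mapsto w+k(x,y,z,w)$ --- each chosen so as not to spoil the earlier steps, which forces $h,k$ to vanish to the required order and pins down $\psi _{1}(0,0,z,w)=0$, $\psi _{2}(0,0,0,w)=0$ --- bring these components to the form displayed in (\ref{NF}). What guarantees that the leading $\partial _{z},\partial _{w}$-parts along the abnormal curves are the flat ones, and hence divisible out, is precisely the Hamiltonian-type assumption. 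Indeed, if $\gamma $ is an abnormal curve, reparametrised so as to be a t.f.d.\ Hamiltonian geodesic, then by (\ref{il}) its Hamiltonian lift $\lambda $ satisfies $\left\langle \lambda ,X_{0}\right\rangle \equiv -1$ and $\left\langle \lambda ,X_{1}\right\rangle \equiv 0$ along $\gamma $; differentiating these relations along the flow of $\overrightarrow{\mathcal{H}}$ successively yields $\left\langle \lambda ,[X_{0},X_{1}]\right\rangle \equiv 0$ and $\left\langle \lambda ,[X_{0},[X_{0},X_{1}]]\right\rangle \equiv 0$, and since the abnormal curves fill a neighbourhood of $q_{0}$ these are relations valid on an open set. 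They translate into constraints on the structure functions of $(X_{0},X_{1})$ --- exactly the ones which fail for a generic Engel sub-Lorentzian metric, cf.\ Remark 1.1 --- and these are what make the two normalisations above possible.

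Accordingly, I expect the technical core, and the main obstacle, to lie in these last two normalisation steps: verifying that the coordinate changes realising the exact coefficients of (\ref{NF}) can be performed while simultaneously keeping $\{y=z=w=0\}$ (then $\{y=0\}$) abnormal, keeping $X_{0}$ its future directed unit timelike tangent, and not destroying the $\partial _{x},\partial _{y}$-normalisation --- the mutual compatibility of these demands being exactly what the Hamiltonian-type hypothesis, through (\ref{il}), provides. A short final computation then checks that in the coordinates so obtained the orthonormal frame has the form (\ref{NF}) with $\varphi ,\psi _{1},\psi _{2}$ smooth and satisfying the asserted vanishing, and that $X$ is a time orientation with the stated property.
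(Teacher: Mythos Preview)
Your outline departs from the scheme of \cite{g2}, \cite{g8}, \cite{g9} and of the present paper at the crucial point. You fix once and for all the canonical abnormal frame $(X_{0},X_{1})$ and declare that ``no frame boost freedom remains, and all the normalisation will be carried out by diffeomorphisms of the base''. But the key structural feature of (\ref{NF}) is the identity $xX+yY=x\partial_{x}+y\partial_{y}$ for the \emph{full} vector fields, which is a Gauss-lemma statement: the radial lines $t\mapsto (at,bt,z_{0},w_{0})$ are unit-speed Hamiltonian geodesics. In the paper this is not a normalisation achieved by solving for a coordinate change; it is the \emph{definition} of the coordinates, via the sub-Lorentzian exponential map based along a $2$-surface $\Gamma$ transverse to $H^{2}$, namely $q$ has coordinates $(x,y,z,w)$ iff $q=\exp_{(0,0,z,w)}(-x,y,0,0)$. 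The frame $X,Y$ is then obtained from a radial/angular frame $F,G$ (adapted to hyperbolic polar coordinates on the sectors $S_{1},S_{2}$) by a pointwise Lorentz boost that is singular along $\{x^{2}=y^{2}\}$, and the technical core of the proof --- Propositions 2.1 and 2.2 --- is to show, by reading coefficients off the globally smooth geodesic Hamiltonian $\mathcal{H}$, that the resulting $X,Y$ nonetheless extend smoothly across that set. So a frame boost \emph{is} used, and the $X$ of (\ref{NF}) need not coincide with your $X_{0}$ away from $\{y=0\}$: nothing in the construction forces $[X,[X,Y]]\in H^{2}$ off that hypersurface.

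Your step 2, as written, is therefore overdetermined: putting the $\partial_{x},\partial_{y}$-parts of a \emph{fixed} pair $(X_{0},X_{1})$ into the single-$\varphi$ shape imposes three independent relations on the four component functions, while a change of only the $x,y$ variables provides two. The role you assign to the Hamiltonian-type hypothesis --- deriving, via (\ref{il}), the relations $\langle\lambda,[X_{0},X_{1}]\rangle=\langle\lambda,[X_{0},[X_{0},X_{1}]]\rangle=0$ along abnormals --- is correct in spirit and corresponds to Lemmas 2.4--2.5 of the paper (there carried out with the abnormal PMP lift rather than the Hamiltonian lift, but to the same effect); however those lemmas are used only to extract the extra factor of $y$ in the $\partial_{w}$-coefficient, \emph{after} the exponential-map coordinates and the boosted frame are already in place. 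What your plan is missing is precisely this exponential-map construction and the accompanying smoothness analysis; without them the ``$x,y$-normalisation'' step has no mechanism by which it can succeed.
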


Theorem 1.1 is a starting point to the investigation of Engel sub-Lorentzian
structures. By the way we obtain the following partial result. Let $H$ be
such a rank two bracket generating distribution on a $4$-manifold $M$ that $%
H^{2}$ is everywhere of rank $3$ (in particular, the situation $H^{3}\neq TM$
is allowed now, so one may need more Lie brackets to generate the whole
tangent space). Suppose moreover that through each point of $M$ there passes
exactly one abnormal curve and besides all abnormal curves are trajectories
of a single smooth vector field. Now let $g$ be a Lorentzian metric on $H$
such that all abnormal curves are t.f.d. Hamiltonian geodesics. Then we can
prove

\begin{proposition}
Let $(H,g)$ be a germ at $q_{0}\in M$ of a time-oriented sub-Lorentzian
structure defined above. Then, around $q_{0}$, there exist coordinates $%
x,y,z,w$, $x(q_{0})=...=w(q_{0})=0$, in which $(H,g)$ admits an orthonormal
frame in the form 
\begin{equation*}
\left. 
\begin{array}{c}
X=\dfrac{\partial }{\partial x}+y\varphi \left( y\dfrac{\partial }{\partial x%
}+x\dfrac{\partial }{\partial y}\right) +\frac{1}{2}y(1+\psi _{1})\dfrac{%
\partial }{\partial z}-yA_{2}\dfrac{\partial }{\partial w} \\ 
Y=\dfrac{\partial }{\partial y}-x\varphi \left( y\dfrac{\partial }{\partial x%
}+x\dfrac{\partial }{\partial y}\right) -\frac{1}{2}x(1+\psi _{1})\dfrac{%
\partial }{\partial z}+xA_{2}\dfrac{\partial }{\partial w}%
\end{array}%
\right.
\end{equation*}%
where $\varphi $, $\psi _{1}$, $A_{2}$ are smooth functions, $\psi
_{1}(0,0,z,w)=0$, and $X$ is a time orientation whose trajectories contained
in $\left\{ y=0\right\} $ are abnormal curves for $H$.
\end{proposition}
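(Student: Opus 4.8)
The strategy is to run the normalization underlying Theorem~1.1 and to stop at the step where the Engel condition $H^{3}=TM$ would be used; I only sketch it.

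\emph{Frame and structural identities.} By hypothesis the abnormal curves are timelike and are the trajectories of a single smooth vector field; rescaling that field to unit $g$-length and future direction produces a vector field $X$ with $g(X,X)=-1$ whose trajectories are exactly the abnormal curves, and $X$ is the required time orientation. Complete it to an orthonormal frame $\{X,Y\}$ of $H$ (this fixes $Y$ up to sign); the metric is then recovered simply by declaring $X,Y$ orthonormal, so only a change of coordinates remains. Since $X$ spans the unique abnormal direction it spans the Cauchy characteristic of the corank one distribution $H^{2}$, whence $[X,[X,Y]]\in H^{2}=\mathrm{Span}\{X,Y,[X,Y]\}$. Moreover each abnormal curve, parametrized so that its velocity is $X$, is a Hamiltonian geodesic; so if $\mu(\cdot)$ is a Hamiltonian lift then by (\ref{il}) $\langle\mu,X\rangle\equiv-1$ and $\langle\mu,Y\rangle\equiv0$, and differentiating along $\overrightarrow{\mathcal H}$ exactly as in Remark~1.1 gives in turn $\langle\mu,[X,Y]\rangle\equiv0$ and $\langle\mu,[X,[X,Y]]\rangle\equiv0$. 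Writing $[X,[X,Y]]=aX+bY+c[X,Y]$, the last relation forces $a\equiv0$ along every abnormal curve, hence, these curves foliating a neighbourhood of $q_{0}$, identically there. So the structural identity driving the construction is $[X,[X,Y]]\in\mathrm{Span}\{Y,[X,Y]\}$.

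\emph{Coordinates.} I would choose coordinates $x,y,z,w$ with $x(q_{0})=\dots=w(q_{0})=0$ adapted to the flag $H\subset H^{2}\subset TM$ along the abnormal curve $\Gamma_{0}$ through $q_{0}$: straighten $\Gamma_{0}$ to the $x$-axis with $X|_{\Gamma_{0}}=\partial_{x}$, arrange $Y|_{\Gamma_{0}}=\partial_{y}$, $\partial_{z}|_{\Gamma_{0}}\in H^{2}$ and $\partial_{w}\notin H^{2}$ along $\Gamma_{0}$, and moreover take $\{y=0\}$ to be swept out by abnormal curves, so that $X=\partial_{x}$ on $\{y=0\}$. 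Then I would eliminate the unwanted terms off $\Gamma_{0}$ by the residual freedom — coordinate changes that fix $q_{0}$ and preserve the flag (changes of $z$ and of $w$ in all the variables, and a controlled reparametrization of $y$): first bring the $\partial_{z}$-component of $X$ to $\tfrac12 y(1+\psi_{1})$ with $\psi_{1}$ normalized so that $\psi_{1}(0,0,z,w)=0$; then the structural identity, together with the flag adaptation and the normalization $xX+yY=x\,\partial_{x}+y\,\partial_{y}$ arranged along the way, forces the residual horizontal correction of $X$ into the form $y\varphi\,(y\,\partial_{x}+x\,\partial_{y})$, and the corresponding data of $Y$ — the term $-x\varphi\,(y\,\partial_{x}+x\,\partial_{y})$ and the $\partial_{z}$-term $-\tfrac12 x(1+\psi_{1})$ — are then determined. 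Finally, since $X=\partial_{x}$ on $\{y=0\}$, its $\partial_{y}$-, $\partial_{z}$- and $\partial_{w}$-components vanish identically there, so the $\partial_{w}$-component equals $-yA_{2}$ for a smooth function $A_{2}$, and by $xX+yY=x\,\partial_{x}+y\,\partial_{y}$ the $\partial_{w}$-term of $Y$ equals $+xA_{2}$. This is the asserted frame.

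\emph{Main obstacle, and where the Engel hypothesis enters.} The hard part is the bookkeeping in the second step: the coordinate changes must be ordered so that no step undoes an earlier normalization, one must keep track of which coefficients the structural identity actually constrains, and one must verify that the outcome has exactly the displayed shape — in particular that the horizontal corrections of $X$ and of $Y$ are governed by one and the same function $\varphi$, and that $\psi_{1}$ can be normalized as stated. It is also worth recording where the Engel hypothesis would be used: under $H^{3}=TM$ the coordinate $w$ has ``weight three'', which forces the $\partial_{w}$-component of $X$ to vanish to second order along $\{y=0\}$, i.e.\ $A_{2}=-\tfrac12 y(1+\psi_{2})$ with $\psi_{2}(0,0,0,w)=0$; that refinement yields Theorem~1.1. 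Without it, the factor $-yA_{2}$ is the best one obtains, which is the content of the Proposition.
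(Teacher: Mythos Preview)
Your sketch identifies correct ingredients --- the abnormal direction furnishes the time orientation $X$, and the Hamiltonian hypothesis indeed forces $[X,[X,Y]]\in\mathrm{Span}\{Y,[X,Y]\}$ --- but the mechanism you propose for reaching the displayed frame is not the paper's, and as written it has a real gap. The paper's construction is built around \emph{normal coordinates defined by the geodesic exponential map}: after preliminary coordinates obtained from the flows of $\tilde X$, $\tilde Y$, $[\tilde X,\tilde Y]$ applied to a curve $\Gamma$ transverse to $H^{2}$ (Lemma~2.1), one replaces $(\tilde x,\tilde y)$ by exponential coordinates so that every line $t\mapsto(at,bt,z_{0},w_{0})$ is a Hamiltonian geodesic. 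It is precisely this step that delivers the identity $xX+yY=x\,\partial_{x}+y\,\partial_{y}$ for free --- a Gauss-lemma phenomenon, proved via invariance of the Liouville form under the Hamiltonian flow (Lemma~2.2) --- and hence the fact that a single function $\varphi$ controls the $(\partial_{x},\partial_{y})$-corrections of both $X$ and $Y$. The smoothness of $A_{1},A_{2},B$ across the singular locus $\{x^{2}=y^{2}\}$ (where the hyperbolic polar frame blows up) is then read off from the global smoothness of the geodesic Hamiltonian $\mathcal H$ (Propositions~2.1 and 2.2), a hyperbolic rotation of the frame gives the pre-normal form (\ref{preNorm}), and a rescaling of $z$ using $\dim H^{2}=3$ normalizes $A_{1}(0,0,z,w)=-\tfrac12$, i.e.\ $\psi_{1}(0,0,z,w)=0$.

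Your proposal bypasses the exponential map entirely and tries to ``arrange along the way'' the identity $xX+yY=x\,\partial_{x}+y\,\partial_{y}$ via flag-preserving coordinate changes (changes of $z,w$ in all variables and a reparametrization of $y$). But that identity is exactly the characterization of exponential normal coordinates in this setting; the residual changes you list act on the $\partial_{z},\partial_{w}$ components and on the $y$-parametrization and do not by themselves produce horizontality of the radial field $x\,\partial_{x}+y\,\partial_{y}$. Without it there is no reason a single $\varphi$ should suffice, and your sketch gives no alternative mechanism. Moreover, the bracket relation $[X,[X,Y]]\in\mathrm{Span}\{Y,[X,Y]\}$ that you call ``the structural identity driving the construction'' is, in the paper, not used at all for Proposition~1.1: the pre-normal form (\ref{preNorm}) together with the $z$-rescaling already yields the Proposition, and the bracket identities on $P=\{y=0\}$ (Lemmas~2.4--2.5) enter only afterwards, to force the extra factor of $y$ in the $\partial_{w}$-term and pass to Theorem~1.1.
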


Using theorem 1.1, in further parts of the paper, we attempt to describe
reachable sets from the origin for Engel sub-Lorentzian structures.

If $(M,H,g)$ is a sub-Lorentzian manifold, $q_{0}$ is a fixed point in $M$,
and $U$ is a neighbourhood of $q_{0}$, then by \textit{the (future)
nonspacelike reachable set from }$q_{0}$ we mean the set of all points $q\in
U$ such that $q$ can be reached from $q_{0}$ by nspc.f.d. curve contained in 
$U$; this set will be denoted by $J^{+}(q_{0},U)$. Replacing nspc.f.d.
curves with t.f.d. and null f.d. curves we obtain the definition of \textit{%
the (future) timelike reachable set} $I^{+}(q_{0},U)$, and \textit{the
(future) null reachable set} $N^{+}(q_{0},U)$, respectively. For general $U$
we know that the three reachable sets have the same interiors (which are
nonempty) and closures relative to $U$. In order to be able to obtain more
precise results we need to impose certain assumptions on $U$. To this end
notice that if $U$ is sufficiently small then the metric $g$ can be extended
to a Lorentzian metric $\tilde{g}$ defined on a neighbourhood of $U$. Now, $%
U $ is called \textit{a normal neighbourhood of }$q_{0}$ if it is a convex
normal neighbourhood of $q_{0}$ with respect to $\tilde{g}$ (see e.g. \cite%
{ON}) and\ its closure is contained in some other convex normal
neighbourhood of $q_{0}$ with respect to $\tilde{g}$. Recall \cite{g6} in
this place that if $U$ is a normal neighbourhood of $q_{0}$ then 
\begin{equation*}
J^{+}(q_{0},U)=cl_{U}\left( int\text{ }I^{+}(q_{0},U)\right) =cl_{U}\left(
int\text{ }N^{+}(q_{0},U)\right) \text{,}
\end{equation*}%
where $cl_{U}$ is the closure with respect to $U$. The basic objects, when
studying reachable sets, are the so-called geometrically optimal curves. A
nspc.f.d. curve $\gamma :[0,T]\longrightarrow U$ is called \textit{%
geometrically optimal} if $\gamma \left( \lbrack 0,T]\right) \subset
\partial _{U}J^{+}(\gamma (0),U)$, where $\partial _{U}$ stands for the
boundary operator taken with respect to $U$. It is a standard fact that
geometrically optimal curves satisfy Pontriagin maximum principle - see eg. 
\cite{Agr}.

First of all we prove the following

\begin{proposition}
Let $(H,g)$ be given by an orthonormal frame in the normal form (\ref{NF}),
where $\varphi =\varphi (x,y,w)$, $\psi _{2}=\psi _{2}(x,y,w)$, i.e. $%
\varphi ,\psi _{2}$ do not depend on $z$. Then the abnormal curve starting
from zero (which is t.f.d) is geometrically optimal.
\end{proposition}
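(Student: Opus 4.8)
The plan is to show that every point of the abnormal curve lies on $\partial _{U}J^{+}(0,U)$ by producing, arbitrarily close to it, points that cannot be reached from the origin by any nspc.f.d.\ curve contained in $U$.

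First I would write down the equations of a nspc.f.d.\ curve in the normal coordinates. Writing $\dot{\gamma}=uX+vY$ with $u\geq |v|$, $u>0$, and abbreviating $\sigma :=uy-vx$, the frame (\ref{NF}) gives
\begin{equation*}
\dot{x}=u+\varphi y\sigma ,\qquad \dot{y}=v+\varphi x\sigma ,\qquad \dot{w}=\tfrac{1}{2}(1+\psi _{2})\,y\sigma .
\end{equation*}
Because $\varphi $ and $\psi _{2}$ do not depend on $z$, these three equations form a closed subsystem (the $z$-equation, the only one involving $\psi _{1}$, is never used). When $y=0$ the frame reduces to $X=\partial /\partial x$, so the abnormal curve issuing from $0$ is $s\mapsto (s,0,0,0)$; it is t.f.d.\ by hypothesis, hence $\gamma ([0,T])\subset J^{+}(0,U)$ for $T$ small.

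The core of the argument is the estimate: \emph{on a sufficiently small normal neighbourhood $U$ of $0$, every nspc.f.d.\ curve $\tilde{\gamma}:[0,S]\longrightarrow U$ with $\tilde{\gamma}(0)=0$ and $y(S)=0$ satisfies $w(S)\geq 0$, with equality only when $\tilde{\gamma}$ is, up to reparametrization, a subarc of the abnormal curve.} To prove it I would eliminate the controls: the first two equations give $y\dot{x}-x\dot{y}=\bigl(1-\varphi (x^{2}-y^{2})\bigr)\sigma $, so $\sigma =\mu \,(y\dot{x}-x\dot{y})$ with $\mu :=\bigl(1-\varphi (x^{2}-y^{2})\bigr)^{-1}$ smooth and $\mu (0)=1$, whence $\dot{w}=\tfrac{1}{2}F\,y(y\dot{x}-x\dot{y})$ with $F:=(1+\psi _{2})\mu $. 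Integrating over $[0,S]$ and integrating $\int F\,xy\dot{y}\,dt$ by parts — the boundary terms vanishing since $y(0)=y(S)=0$ — gives
\begin{equation*}
w(S)=\tfrac{3}{4}\int_{0}^{S}F\,\dot{x}\,y^{2}\,dt+\tfrac{1}{4}\int_{0}^{S}\dot{F}\,x\,y^{2}\,dt .
\end{equation*}
On a small enough $U$ one has $F\geq \tfrac{1}{3}$ and $\dot{x}>0$, so in the first integral $x$ may be used as the parameter, giving $\tfrac{3}{4}\int_{0}^{x(S)}F\,y(x)^{2}\,dx\geq \tfrac{1}{4}\int_{0}^{x(S)}y(x)^{2}\,dx$; and since $\dot{F}$ and $\dot{x}$ are $O(u)$ while $|x|\leq \operatorname{diam}U$, the second integral is bounded in modulus by $\bigl(\mathrm{const}\cdot \operatorname{diam}U\bigr)\int_{0}^{x(S)}y(x)^{2}\,dx$. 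Shrinking $U$ makes the first term dominate, so $w(S)\geq 0$; equality forces $y\equiv 0$, hence (from $\dot{y}=0$) $v\equiv 0$, i.e.\ $\tilde{\gamma}$ runs along the abnormal curve.

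Granting the estimate, no point $(t_{1},0,0,-\delta )$ with $\delta >0$ small belongs to $J^{+}(0,U)$, since a curve reaching it would have $y(S)=0$ yet $w(S)=-\delta <0$. As $\delta \to 0^{+}$ these points converge to $\gamma (t_{1})=(t_{1},0,0,0)\in J^{+}(0,U)$, so $\gamma (t_{1})\in \partial _{U}J^{+}(0,U)$ for every $t_{1}\in [0,T]$; together with $0\in \partial _{U}J^{+}(0,U)$ this is exactly geometric optimality of the abnormal curve. The step I expect to be the main obstacle — and where the hypothesis $\partial \varphi /\partial z=\partial \psi _{2}/\partial z=0$ genuinely enters — is getting a manifestly sign-controlled formula for $w(S)$: expanding $\dot{w}$ directly in the controls $u,v$ produces a term $\int x^{2}y\,\sigma \,dt$ that is \emph{not} dominated by $\int y(x)^{2}\,dx$, whereas the substitution $\sigma =\mu (y\dot{x}-x\dot{y})$ together with the single integration by parts above forces every error term to carry a factor $\operatorname{diam}U$ and hence to be absorbed once $U$ is small.
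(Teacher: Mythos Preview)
Your argument is correct, but it takes a genuinely different route from the paper's. The paper does not analyse the $w$-coordinate directly at all; instead it exploits the hypothesis $\varphi=\varphi(x,y,w)$, $\psi_2=\psi_2(x,y,w)$ structurally, observing that the fields $X,Y$ are $p$-related, under the projection $p(x,y,z,w)=(x,y,w)$, to a well-defined frame $\tilde X,\tilde Y$ on $\mathbb{R}^3$ which is precisely the normal form of a Martinet sub-Lorentzian structure of Hamiltonian type treated in \cite{g8}. A short lemma (Proposition~3.1 in the paper) shows that lifts of geometrically optimal curves are again geometrically optimal; since the projected abnormal curve is optimal by \cite{g8}, so is its lift. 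Your approach is self-contained: the identity $w(S)=\tfrac34\int F\dot x\,y^{2}\,dt+\tfrac14\int \dot F\,x\,y^{2}\,dt$ together with the domination estimate essentially re-derives the Martinet inequality inside the four-dimensional problem, trading dependence on \cite{g8} for a repetition of the analytic work done there.

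One side remark: your last paragraph overstates the role of the $z$-independence hypothesis in \emph{your} argument. Even when $F$ depends on $z$ one still has $|\dot z|=\tfrac12|1+\psi_1|\,|\sigma|=O(u)$, so $|\dot F|\le C\dot x$ continues to hold with $C$ bounded on a fixed compact neighbourhood, and the error term $\tfrac14\int\dot F\,x\,y^{2}\,dt$ still carries the factor $\mathrm{diam}\,U$. Thus your estimate actually goes through without the hypothesis. In the paper's approach, by contrast, $z$-independence is essential (otherwise the projected fields are not even well defined), and the paper explicitly leaves the general case open; see the closing remark of Section~3.3.
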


The proof uses the observation that lifts of geometrically optimal curves
are again geometrically optimal. Remark here that timelike abnormal curves
always satisfy necessary conditions for optimality from Pontriagin maximum
principle, and in general it is not a trivial thing to determine if a given
timelike abnormal curve is geometrically optimal or not (cf. \cite{Bonnard}
and note that timelike abnormal curves correspond to singular trajectories
of affine control systems - see \cite{g6}). Examples of timelike abnormal
curves which are not geometrically optimal can be found in \cite{g9}.

Using proposition 1.2 we come to the investigation of reachable sets. In
papers \cite{g8}, \cite{g9}, the author managed to give a precise
description of reachable sets from the origin for sub-Lorentzian structures,
where the abnormal t.f.d. curves fill a hypersurface passing through the
origin. As it is noticed above, Engel case is much harder to study since
here abnormal t.f.d. curves pass through every point. From this reason the
methods developed earlier by the author do not work (or at least require
serious modifications), and therefore we obtain only certain estimates on
reachable sets - propositions 3.2, 3.3.

\textbf{The organization of the paper.}

In section 2 we prove theorem 1.1 and proposition 1.1. In section 3 we prove
propositions 1.2, 3.2, and 3.3.

\section{Normal Forms}

Let $(H,g)$ be a time-oriented Engel sub-Lorentzian structure of Hamiltonian
type. Without loss of generality we can suppose it to be defined in a
neighbourhood $U$ of $0\in \mathbb{R}^{4}$. Throughout this section $U$ will
be supposed to be as small as it is needed to justify various statements
that are made below.

\subsection{Normal coordinates.}

Let $\tilde{X}$, $\tilde{Y}$ be an orthonormal frame for $(H,g)$ such that $%
\tilde{X}$ is a time orientation and the trajectories of $\tilde{X}$ are
exactly the abnormal curves for $H$. Such a field exists by \cite{Suss}, 
\cite{Suss2}, and we can assume that these trajectories are t.f.d.\
Hamiltonian geodesics (if it were not the case we change parameterization).
By definition of Engel structure the fields $\tilde{X}$, $\tilde{Y}$, $[%
\tilde{X},\tilde{Y}]$ are linearly independent. Choose a curve $\Gamma $
passing through the origin, and such that $\Gamma $ is transverse to $%
H^{2}=Span\left\{ \tilde{X},\tilde{Y},[\tilde{X},\tilde{Y}]\right\} $.
Denote by $g_{\tilde{X}}^{t}$ (resp. $g_{\tilde{Y}}^{t}$, $g_{[\tilde{X},%
\tilde{Y}]}^{t}$) the (local) flow of $\tilde{X}$ (resp. $\tilde{Y}$, $[%
\tilde{X},\tilde{Y}]$) defined on $U$, and let $P=\tbigcup\nolimits_{t,s}g_{%
\tilde{X}}^{t}\circ g_{[\tilde{X},\tilde{Y}]}^{s}\Gamma $; clearly $P$ is a
smooth hypersurface.

\begin{lemma}
There are coordinates $\tilde{x},\tilde{y},\tilde{z},\tilde{w}$ on $U$ such
that\newline
(i) $P=\left\{ \tilde{y}=0\right\} $, $\Gamma =\left\{ \tilde{x}=\tilde{y}=%
\tilde{z}=0\right\} $;\newline
(ii) $H_{|P}=\ker d\tilde{z}\cap \ker \tilde{w}$;\newline
(iii) $\frac{\partial }{\partial \tilde{x}}_{|P}$, $\frac{\partial }{%
\partial \tilde{y}}_{|P}$ is an orthonormal frame for $(H,g)$, and $\frac{%
\partial }{\partial \tilde{x}}_{|P}$ is a time orientation.
\end{lemma}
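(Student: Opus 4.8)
The plan is to construct the coordinates in two stages: first use the flows of $\tilde X$ and $[\tilde X,\tilde Y]$ together with $\tilde Y$ to build a "product-type" chart adapted to the hypersurface $P$, and then correct the last two coordinates so that $H$ restricted to $P$ becomes $\ker d\tilde z\cap\ker d\tilde w$ and the coordinate vector fields restrict to an orthonormal frame on $P$.

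First I would parametrize $P$ by its construction: since $P=\bigcup_{t,s}g_{\tilde X}^t\circ g_{[\tilde X,\tilde Y]}^s\Gamma$ and $\tilde X$, $[\tilde X,\tilde Y]$ are transverse to each other and to $\Gamma$ (because $\tilde X,\tilde Y,[\tilde X,\tilde Y]$ are independent and $\Gamma$ is transverse to $H^2$), the map $(s,t,\text{parameter on }\Gamma)\mapsto g_{\tilde X}^t\circ g_{[\tilde X,\tilde Y]}^s(\Gamma(\cdot))$ is a local diffeomorphism onto $P$, shrinking $U$ if necessary. Then I would flow off $P$ by $\tilde Y$: the map $(p,y)\mapsto g_{\tilde Y}^y(p)$, $p\in P$, is a local diffeomorphism near $0$ since $\tilde Y\notin T_pP$ (indeed $T_pP$ is spanned by $\tilde X,[\tilde X,\tilde Y]$ and a vector transverse to $H^2$, none of which is $\tilde Y$). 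Composing, I obtain coordinates $(\tilde x,\tilde y,\tilde z,\tilde w)$ in which $P=\{\tilde y=0\}$, in which $\tilde Y=\partial/\partial\tilde y$ identically, and in which $\Gamma=\{\tilde x=\tilde y=\tilde w=0\}$ — relabeling the parameter on $\Gamma$ as $\tilde z$ and choosing $\tilde w$ as the transverse parameter so that $\Gamma$ sits at $\tilde w=0$. A small reshuffle (swap the roles of the two "extra" coordinates if needed) gives $\Gamma=\{\tilde x=\tilde y=\tilde z=0\}$, which is (i). In these coordinates $\tilde x$ is the $\tilde X$-flow parameter along $P$ and $[\tilde X,\tilde Y]$ is the $\tilde z$-flow direction along $P$, so on $P$ we have $\partial/\partial\tilde x=\tilde X$ and $\partial/\partial\tilde z$ proportional to $[\tilde X,\tilde Y]$; combined with $\tilde Y=\partial/\partial\tilde y$ this already shows $H_{|P}=\mathrm{Span}\{\partial/\partial\tilde x,\partial/\partial\tilde y\}_{|P}$, hence $H_{|P}=\ker d\tilde z\cap\ker d\tilde w$ restricted to $P$ modulo the fact that $\tilde z,\tilde w$ may still need rescaling/mixing — but since $H_{|P}$ is already cut out by two independent covectors annihilating $\partial/\partial\tilde x,\partial/\partial\tilde y$ on $P$, and $d\tilde z,d\tilde w$ are two such independent covectors on $P$, (ii) holds. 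Finally, on $P$ the frame $\partial/\partial\tilde x=\tilde X$, $\partial/\partial\tilde y=\tilde Y$ is orthonormal with $\partial/\partial\tilde x=\tilde X$ timelike, giving (iii).

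The main obstacle I expect is bookkeeping the transversality and the ordering of coordinates so that all three conditions hold simultaneously: the flows of $\tilde X$ and $[\tilde X,\tilde Y]$ need not commute, so the triple $(s,t)\mapsto g_{\tilde X}^t\circ g_{[\tilde X,\tilde Y]}^s$ must be handled in a fixed order and one must check its Jacobian at the origin has the right rank; and one must verify that after flowing by $\tilde Y$ the hypersurface $P$ stays at $\tilde y=0$ rather than smearing out — this is automatic because $P$ is exactly the $\tilde y=0$ slice by construction of the chart, but it relies on choosing the $\tilde Y$-flow as the \emph{last} factor. The other delicate point is that $H_{|P}$ must be expressible as $\ker d\tilde z\cap\ker d\tilde w$ on the nose, not just up to a change in the $(\tilde z,\tilde w)$-plane; since at this stage we are still free to replace $(\tilde z,\tilde w)$ by any diffeomorphism of the last two coordinates fixing $P$ and $\Gamma$, and since the only constraint is that two independent covectors cut out $H_{|P}$, this freedom is exactly enough to arrange (ii) while keeping (i).

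I would close by remarking that nothing here uses the full Engel/Hamiltonian-type hypothesis beyond $\tilde X,\tilde Y,[\tilde X,\tilde Y]$ being independent and the existence of the abnormal field $\tilde X$; the normalization of $\tilde z,\tilde w$ in the full normal form (\ref{NF}) — in particular the coefficients $\frac12 y(1+\psi_1)$ and $\frac12 y^2(1+\psi_2)$ and the vanishing conditions on $\psi_1,\psi_2$ — is carried out in the subsequent steps starting from this lemma, by integrating the structure equations for $[\tilde X,\tilde Y]$, $[\tilde Y,[\tilde X,\tilde Y]]$ and using that the $\{\tilde y=0\}$-trajectories of $\tilde X$ are abnormal Hamiltonian geodesics.
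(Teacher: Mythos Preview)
Your approach is essentially the paper's: the coordinates are defined by the single map
\[
(\tilde x,\tilde y,\tilde z,\tilde w)\longmapsto g_{\tilde Y}^{\tilde y}\circ g_{\tilde X}^{\tilde x}\circ g_{[\tilde X,\tilde Y]}^{\tilde z}\sigma(\tilde w),
\]
where $\sigma(\tilde w)$ parametrizes $\Gamma$, and (i)--(iii) follow by inspection. Your two-stage description with a subsequent ``correction'' of $(\tilde z,\tilde w)$ is unnecessary: once the chart is built this way, $\partial/\partial\tilde x|_P=\tilde X$ and $\partial/\partial\tilde y|_P=\tilde Y$ hold on the nose, so $H_{|P}=\ker d\tilde z\cap\ker d\tilde w$ and (iii) are immediate without any further change of the last two coordinates; also the parameter on $\Gamma$ should be $\tilde w$ from the start (not $\tilde z$ followed by a swap).
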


\begin{proof}
Let $\sigma (\tilde{w})$, $\sigma (0)=0$, be a parameterization of $\Gamma $%
. Then the coordinates we look for are given by the diffeomorphism $(\tilde{x%
},\tilde{y},\tilde{z},\tilde{w})\longrightarrow g_{\tilde{Y}}^{\tilde{y}%
}\circ g_{\tilde{X}}^{\tilde{x}}\circ g_{[\tilde{X},\tilde{Y}]}^{\tilde{z}%
}\sigma (\tilde{w})$.
\end{proof}

Since $\tilde{X}=\frac{\partial }{\partial \tilde{x}}$ on $P$, the curves $%
t\longrightarrow (t,0,z_{0},w_{0})$ (written in just constructed coordinates 
$\tilde{x},\tilde{y},\tilde{z},\tilde{w}$) are abnormal. Denote by $p_{%
\tilde{x}},p_{\tilde{y}},p_{\tilde{z}},p_{\tilde{w}}$ the dual coordinates
to $\tilde{x},\tilde{y},\tilde{z},\tilde{w}$. Then $\mathcal{H}%
_{|T_{P}^{\ast }\mathbb{R}^{4}}=-\frac{1}{2}p_{\tilde{x}}^{2}+\frac{1}{2}p_{%
\tilde{y}}^{2}$ where $T_{P}^{\ast }\mathbb{R}^{4}=\tbigcup\nolimits_{x\in
P}T_{x}^{\ast }\mathbb{R}^{4}$, and since, by assumption, the curves $%
t\longrightarrow (t,0,z_{0},w_{0})$ are abnormal t.f.d. Hamiltonian
geodesics we obtain 
\begin{equation*}
\mathcal{H}(\tilde{x},\tilde{y},\tilde{z},\tilde{w},p_{\tilde{x}},p_{\tilde{y%
}},p_{\tilde{z}},p_{\tilde{w}})=-\frac{1}{2}p_{\tilde{x}}^{2}+\frac{1}{2}p_{%
\tilde{y}}^{2}+\tilde{y}^{2}\mathcal{G}(\tilde{x},\tilde{y},\tilde{z},\tilde{%
w},p_{\tilde{x}},p_{\tilde{y}},p_{\tilde{z}},p_{\tilde{w}})
\end{equation*}%
for a smooth function $\mathcal{G}$.

Let 
\begin{equation*}
A_{\Gamma }=\left\{ \left( 0,0,\tilde{z},\tilde{w},p_{\tilde{x}},p_{\tilde{y}%
},0,0\right) :\;\left\vert \tilde{z}\right\vert ,\left\vert \tilde{w}%
\right\vert <\varepsilon \right\} \text{.}
\end{equation*}%
Consider the mapping $\mu :A_{\Gamma }\longrightarrow U$ given by 
\begin{equation*}
\mu (\tilde{z},\tilde{w},p_{\tilde{x}},p_{\tilde{y}})=\pi \circ \Phi
_{1}\left( 0,0,\tilde{z},\tilde{w},-p_{\tilde{x}},p_{\tilde{y}},0,0\right) 
\text{,}
\end{equation*}%
i.e. in terms of the exponential mapping 
\begin{equation*}
\mu (\tilde{z},\tilde{w},p_{\tilde{x}},p_{\tilde{y}})=\exp _{(0,0,\tilde{z},%
\tilde{w},)}\left( -p_{\tilde{x}},p_{\tilde{y}},0,0\right) \text{.}
\end{equation*}%
If $N$ is a sufficiently small neighbourhood of the set $\left\{ \left( 0,0,%
\tilde{z},\tilde{w},0,0,0,0\right) :\;\left\vert \tilde{z}\right\vert
,\left\vert \tilde{w}\right\vert <\varepsilon \right\} $ in $A_{\Gamma }$
then $\mu :N\longrightarrow \mu (N)$ is a diffeomorphism. Therefore we can
write $\mu (N)=U$, and now we are ready to define \textit{normal coordinates}
$x,y,z,w$ on $U$. These are coordinates given by the mapping 
\begin{equation*}
U\overset{\mu ^{-1}}{\longrightarrow }N\overset{\left( \tilde{z},\tilde{w}%
,-p_{\tilde{x}},p_{\tilde{y}}\right) }{\longrightarrow }\mathbb{R}^{4}%
\overset{\alpha }{\longrightarrow }\mathbb{R}^{4}
\end{equation*}%
where $\alpha (a,b,c,d)=(c,d,a,b)$. To be more precise, a point $q\in U$ has
normal coordinates $(x,y,z,w)$ if and only if $q=\exp _{(0,0,z,w)}(-x,y,0,0)$%
. It follows that the lines $t\longrightarrow (at,bt,z_{0},w_{0})$ are
Hamiltonian geodesics and that $P=\left\{ y=0\right\} $.

Let us define four sets 
\begin{equation*}
S_{1}^{+}=\left\{ q\in U:\;\left\vert y\right\vert <\left\vert x\right\vert 
\text{, }x>0\right\}
\end{equation*}%
\begin{equation*}
S_{1}^{-}=\left\{ q\in U:\;\left\vert y\right\vert <\left\vert x\right\vert 
\text{, }x<0\right\}
\end{equation*}%
\begin{equation*}
S_{2}^{+}=\left\{ q\in U:\;\left\vert y\right\vert >\left\vert x\right\vert 
\text{, }y>0\right\}
\end{equation*}%
\begin{equation*}
S_{2}^{+}=\left\{ q\in U:\;\left\vert y\right\vert >\left\vert x\right\vert 
\text{, }y<0\right\} \text{.}
\end{equation*}%
Then let us put $S_{1}=S_{1}^{+}\cup S_{1}^{-}$, $S_{2}=S_{2}^{+}\cup
S_{2}^{-}$. Moreover let 
\begin{equation*}
R_{1}=\left\{ 
\begin{array}{l}
\sqrt{x^{2}-y^{2}}\;\text{on }S_{1}^{+} \\ 
-\sqrt{x^{2}-y^{2}}\;\text{on }S_{1}^{-}%
\end{array}%
\right. \text{,}
\end{equation*}%
\begin{equation*}
R_{2}=\left\{ 
\begin{array}{l}
\sqrt{y^{2}-x^{2}}\;\text{on }S_{2}^{+} \\ 
-\sqrt{y^{2}-x^{2}}\;\text{on }S_{2}^{-}%
\end{array}%
\right. \text{.}
\end{equation*}%
Now we can introduce hyperbolic cylindrical coordinates $R,\varphi ,z,w$ on $%
S_{1}$: $x=R_{1}\cosh \varphi $, $y=R_{1}\sinh \varphi $, and on $S_{2}$ and 
$x=R_{2}\sinh \varphi $, $y=R_{2}\cosh \varphi $. Clearly 
\begin{equation*}
\frac{\partial }{\partial R_{1}}=\frac{x}{R_{1}}\frac{\partial }{\partial x}+%
\frac{y}{R_{1}}\frac{\partial }{\partial y}
\end{equation*}%
is unit t.f.d. on $S_{1}^{+}$ since it is the velocity vector of the
geodesic $s\longrightarrow (s\cosh \varphi ,s\sinh \varphi ,z_{0},w_{0})$.
From similar reasons it is unit t. past directed on $S_{1}^{-}$. Also, for
instance on $S_{1}^{+}$ 
\begin{equation*}
\frac{\partial }{\partial \varphi }=y\frac{\partial }{\partial x}+x\frac{%
\partial }{\partial x}\text{,}
\end{equation*}%
however $\frac{\partial }{\partial \varphi }$ is not horizontal in general.

Below we will need the following observations. If we define $\mathcal{C}=%
\mathcal{H}^{-1}(-\frac{1}{2})$ then obviously $\Phi _{s}(\mathcal{C}%
)\subset \mathcal{C}$. Also, if $\alpha $ is the Liouville form then $\alpha 
$ restricted to $\mathcal{C}$ is preserved by the flow $\Phi _{s}$. Denote
by $(R_{0},\varphi _{0},z_{0},w_{0})$ the hyperbolic cylindrical coordinates
on $A_{\Gamma }$. Then evidently 
\begin{equation*}
\frac{\partial }{\partial \varphi }=\left( \pi \circ \Phi _{s}\right) _{\ast
}\frac{\partial }{\partial \varphi _{0}}\text{, \ \ }\frac{\partial }{%
\partial z}=\pi _{\ast }\frac{\partial }{\partial z_{0}}\text{, \ \ }\frac{%
\partial }{\partial w}=\pi _{\ast }\frac{\partial }{\partial w_{0}}\text{.}
\end{equation*}%
Moreover the fields $\frac{\partial }{\partial \varphi _{0}}$, $\frac{%
\partial }{\partial z_{0}}$, $\frac{\partial }{\partial w}$ are tangent to $%
\mathcal{C}$, and in addition $\frac{\partial }{\partial \varphi _{0}}$ is
tangent to the sets $\mathcal{C\cap }T_{(0,0,z_{0},w_{0})}^{\ast }\mathbb{R}%
^{4}$.

In what follows we will use the notion of the horizontal gradient, so now we
recall the definition. Let $f:U\longrightarrow \mathbb{R}$ be a smooth
function defined on an open subset $U$ of the sub-Lorentzian manifold $%
(M,H,g)$; \textit{the horizontal gradient }of the function $f$ is the
(horizontal) vector field $\nabla _{H}f$ determined by the condition ~$%
d_{q}f(v)=g(\nabla _{H}f(q),v)$ for every $v\in H_{q}$, $q\in U$. If $%
X_{0},...,X_{k}$ is an orthonormal frame for $(H,g)$ on $U$ with $X_{0}$
timelike, then $\nabla _{H}f=-X_{0}(f)X_{0}+X_{1}(f)X_{1}+...+X_{k}(f)X_{k}$%
. Now we can prove the following

\begin{lemma}
$\nabla _{H}R_{1}=-\frac{\partial }{\partial R_{1}}$ on $S_{1}^{+}$; in
particular $\nabla _{H}R_{1}$ is unit timelike past directed.
\end{lemma}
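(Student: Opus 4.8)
The plan is to compute $\nabla_H R_1$ directly from the formula $\nabla_H f = -X(f)X + Y(f)Y$ for an orthonormal frame $X,Y$ with $X$ timelike, and then to identify the result with $-\partial/\partial R_1$. The key point I would exploit is that, in the normal coordinates constructed above, the orthonormal frame $X,Y$ on $S_1^+$ is intimately tied to the hyperbolic cylindrical coordinates: the curves $s\mapsto(s\cosh\varphi, s\sinh\varphi, z_0, w_0)$ are Hamiltonian geodesics whose velocity is $\partial/\partial R_1$, which is unit timelike future directed. So the natural first step is to express $X$ and $Y$ in terms of $\partial/\partial R_1$ and the remaining coordinate fields.

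First I would set up the frame: since $\partial/\partial R_1 = \frac{x}{R_1}\frac{\partial}{\partial x} + \frac{y}{R_1}\frac{\partial}{\partial y}$ is a unit timelike field, one can choose as orthonormal frame on $S_1^+$ the pair $X = \partial/\partial R_1$ and $Y = $ the unit spacelike horizontal field orthogonal to it (for the purpose of computing $\nabla_H R_1$ any such frame works, since $\nabla_H$ is frame-independent). Then $X(R_1) = dR_1(\partial/\partial R_1) = 1$ because $R_1$ is exactly the parameter along those geodesics. For $Y$, I would argue $Y(R_1) = dR_1(Y) = 0$: this requires showing that the unit spacelike direction in $H$ orthogonal to $\partial/\partial R_1$ is tangent to the level sets of $R_1$. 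This is where the structure of the normal coordinates enters — one uses that $Y$ lies in the span of $\partial/\partial\varphi$-type directions together with $\partial/\partial z$, $\partial/\partial w$, all of which annihilate $R_1$ since $R_1 = \sqrt{x^2-y^2}$ depends only on the $(x,y)$-plane through the combination that is constant along the hyperbolas $x^2-y^2 = \text{const}$. Concretely $dR_1 = \frac{x}{R_1}dx - \frac{y}{R_1}dy$ (on $S_1^+$), so I just need $Y$ to have vanishing pairing with this one-form; equivalently the $x$ and $y$ components of $Y$ must be proportional to $(y,x)$, i.e. $Y$ points in the $\partial/\partial\varphi$ direction as far as its $(x,y)$-part is concerned.

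Assembling these: $\nabla_H R_1 = -X(R_1)X + Y(R_1)Y = -1\cdot X + 0\cdot Y = -X = -\partial/\partial R_1$. Then the final assertions follow from the observations already recorded in the text: $\partial/\partial R_1$ is unit timelike future directed on $S_1^+$, hence $-\partial/\partial R_1$ is unit timelike, and being the negative of a future directed vector it is past directed. I would phrase this last sentence as a one-line consequence of $g(\partial/\partial R_1, \partial/\partial R_1) = -1$ and $g(-\partial/\partial R_1, X_{\mathrm{time}}) > 0$.

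The main obstacle I expect is justifying rigorously that $Y(R_1) = 0$, i.e. that the horizontal direction transverse to the geodesic flow genuinely has no component along $\nabla R_1$. The cleanest route is probably not to write $Y$ explicitly but to observe that $H|_{S_1^+}$ is spanned by $\partial/\partial R_1$ together with one more vector, and that $dR_1$ already kills $\partial/\partial R_1$'s orthogonal complement by the following reasoning: $R_1$ restricted to each geodesic ray is the arclength parameter, so $dR_1$ evaluated on the geodesic velocity is $1$, while $R_1$ is constant on each hypersurface swept by varying $(\varphi, z, w)$ at fixed $R_1$ — and the horizontal distribution at a point of $S_1^+$, being $2$-dimensional and containing the radial direction, meets that hypersurface's tangent space in exactly a line, on which $R_1$ is constant by construction. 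So any horizontal vector decomposes as (radial part) $+$ (part tangent to the $R_1$-level hypersurface), and $dR_1$ sees only the radial part. I would write this argument carefully, as it is the substantive content of the lemma; everything else is bookkeeping.
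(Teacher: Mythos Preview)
Your outline correctly identifies the crux --- showing $Y(R_1)=0$ for the unit spacelike $Y$ that is $g$-orthogonal to $\partial/\partial R_1$ --- but the justification you sketch does not establish it. Your decomposition argument shows only that $H_q$ contains a nonzero vector $W'$ tangent to the level set $\{R_1=\text{const}\}$ (a triviality, by dimension count). It does \emph{not} show that this $W'$ is $g$-orthogonal to $\partial/\partial R_1$. If $g(\partial/\partial R_1, W')\neq 0$, then the Gram--Schmidt vector $Y$ picks up a nonzero $\partial/\partial R_1$-component and $Y(R_1)\neq 0$. In other words, you are tacitly invoking a Gauss-lemma-type statement --- that radial geodesics are $g$-orthogonal to the $R_1$-level sets inside $H$ --- and that statement is precisely the content of the lemma, not a byproduct of the coordinate decomposition.

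The paper supplies exactly this missing piece via the Hamiltonian lift. Using the identity $g(\dot\gamma(s),v)=\langle \Phi_s(\lambda),v\rangle$ for $v\in H_{\gamma(s)}$ (equation~(\ref{il})), the question reduces to showing that the covector $\Phi_s(\lambda)$ annihilates $\partial/\partial\varphi$, $\partial/\partial z$, $\partial/\partial w$. This is done by the invariance of the Liouville form $\alpha$ under the geodesic flow restricted to $\mathcal{H}^{-1}(-\tfrac12)$: one pulls back to the initial covector $\lambda=(0,0,z_0,w_0,-\cosh\varphi,\sinh\varphi,0,0)$, where the vanishing is manifest. That symplectic argument is what replaces the classical Gauss lemma in this sub-Lorentzian setting; without it (or an equivalent), your computation of $\nabla_H R_1$ cannot be completed.
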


\begin{proof}
Fix a point $q\in $ $S_{1}^{+}$; then $q=\pi \circ \Phi
_{s}(0,0,z_{0},w_{0},-\cosh \varphi ,\sinh \varphi ,0,0)=\exp
_{(0,0,z_{0},w_{0})}s(-\cosh \varphi ,\sinh \varphi ,0,0)$ for suitable $%
\varphi $ and $s>0$. If $\lambda =(0,0,z_{0},w_{0},-\cosh \varphi ,\sinh
\varphi ,0,0)$ then $s\longrightarrow \Phi _{s}(\lambda )$ is a Hamiltonian
lift of the geodesic $\gamma (s)=\exp _{(0,0,z_{0},w_{0})}(s\lambda )$. We
have a sequence of equalities 
\begin{eqnarray*}
\left\langle \Phi _{s}(\lambda ),\tfrac{\partial }{\partial \varphi }%
\right\rangle &=&\left\langle \Phi _{s}(\lambda ),\pi _{\ast }\circ \Phi
_{s_{\ast }}\tfrac{\partial }{\partial \varphi _{0}}\right\rangle
=\left\langle \alpha _{\Phi _{s}(\lambda )},\Phi _{s_{\ast }}\tfrac{\partial 
}{\partial \varphi _{0}}\right\rangle =\left\langle \left( \Phi _{s}^{\ast
}\alpha \right) _{\lambda },\tfrac{\partial }{\partial \varphi _{0}}%
\right\rangle =\left\langle \alpha _{\lambda },\tfrac{\partial }{\partial
\varphi _{0}}\right\rangle \\
&=&\left\langle \lambda ,\pi _{\ast }\tfrac{\partial }{\partial \varphi _{0}}%
\right\rangle =0\text{.}
\end{eqnarray*}%
Also, since $\left\langle \lambda ,\frac{\partial }{\partial z}\right\rangle
=\left\langle \lambda ,\frac{\partial }{\partial w}\right\rangle =0$,
similar sequences of equalities lead to 
\begin{equation*}
\left\langle \Phi _{s}(\lambda ),\tfrac{\partial }{\partial z}\right\rangle
=\left\langle \Phi _{s}(\lambda ),\tfrac{\partial }{\partial w}\right\rangle
=0\text{.}
\end{equation*}%
Now it follows that an orthonormal basis for $(H,g)$ can be taken in the
form 
\begin{equation*}
F=\tfrac{\partial }{\partial R_{1}}\text{, \ \ }G=\tfrac{\partial }{\partial
\varphi }+r_{1}\tfrac{\partial }{\partial z}+r_{2}\tfrac{\partial }{\partial
w}\text{,}
\end{equation*}%
and the result follows from the definition of the horizontal gradient.
\end{proof}

\begin{corollary}
Geodesics $s\longrightarrow (s\cosh \varphi ,s\sinh \varphi ,z_{0},w_{0})$
are unique $U$-maximizers
\end{corollary}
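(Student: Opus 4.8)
The plan is to exploit the fact, recalled just before Lemma~1.3, that the sub-Lorentzian length functional admits a first-integral-type bound along curves whose horizontal gradient is a unit timelike field. Concretely, since $\nabla_H R_1 = -\frac{\partial}{\partial R_1}$ is unit timelike past directed on $S_1^+$ (Lemma~1.3), the function $-R_1$ plays the role of a \emph{time function}: for any nspc.f.d.\ curve $\eta:[a,b]\to U$ we have, a.e.,
\begin{equation*}
\frac{d}{dt}\bigl(-R_1(\eta(t))\bigr) = -d_{\eta(t)}R_1(\dot\eta(t)) = -g\Bigl(\nabla_H R_1(\eta(t)),\dot\eta(t)\Bigr) = g\Bigl(\tfrac{\partial}{\partial R_1},\dot\eta(t)\Bigr).
\end{equation*}
Because $\frac{\partial}{\partial R_1}$ is unit timelike and $\dot\eta(t)$ is nspc.f.d., the reversed Cauchy--Schwarz inequality for Lorentzian metrics on $H$ gives $g(\frac{\partial}{\partial R_1},\dot\eta(t)) \le -\sqrt{-g(\dot\eta(t),\dot\eta(t))}$ (with the appropriate sign coming from the time orientation), hence $\frac{d}{dt}(-R_1(\eta(t))) \le -\sqrt{-g(\dot\eta(t),\dot\eta(t))} \le 0$; integrating, $L(\eta) \le R_1(\eta(a)) - R_1(\eta(b))$.

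Next I would observe that along the candidate geodesic $\gamma(s) = (s\cosh\varphi, s\sinh\varphi, z_0, w_0)$ one has $R_1(\gamma(s)) = s$ exactly (this is immediate from the definition of $R_1$ and of hyperbolic cylindrical coordinates), and $\dot\gamma = \frac{\partial}{\partial R_1}$ is unit timelike, so $L(\gamma\!\restriction_{[s_1,s_2]}) = s_2 - s_1 = R_1(\gamma(s_1)) - R_1(\gamma(s_2))$ with the reversed sign convention; that is, $\gamma$ realizes the upper bound with equality. Combining this with the inequality from the previous paragraph shows that $\gamma$ is length-maximizing among all nspc.f.d.\ curves in $U$ with the same endpoints, i.e.\ it is a $U$-maximizer. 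For uniqueness, I would track the equality case in the reversed Cauchy--Schwarz step: equality forces $\dot\eta(t)$ to be (a.e.) a nonnegative multiple of $\frac{\partial}{\partial R_1}$, and equality in $\sqrt{-g(\dot\eta,\dot\eta)} \ge 0$ being strict unless the curve degenerates; one then argues that a competitor with the same length must have $\dot\eta$ proportional to $\frac{\partial}{\partial R_1}$ everywhere, hence after reparameterization coincides with $\gamma$. (One should also note that a competitor cannot leave $S_1^+$ and re-enter without losing length, since $-R_1$ is monotone along any nspc.f.d.\ curve wherever $R_1$ is defined and the boundary behavior is controlled by $U$ being a normal neighbourhood.)

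The main obstacle I anticipate is the uniqueness part, specifically handling competitor curves that stray outside $S_1^+$ where $R_1$ (and Lemma~1.3) is not available: one needs either to extend the comparison function across the cone $\{|x|=|y|\}$ or to argue that any nspc.f.d.\ curve from a point of $S_1^+$ to another point of $S_1^+$ must stay in $S_1^+$ (which should follow from the causality structure near the abnormal line $\{y=0\}$ together with the monotonicity of $-R_1$, but requires care at the non-smooth locus). A secondary technical point is pinning down the exact sign in the reversed Cauchy--Schwarz inequality relative to the chosen time orientation $\frac{\partial}{\partial R_1}$; this is routine once one fixes conventions, and is implicitly settled by the statement of Lemma~1.3 that $\nabla_H R_1$ is past directed. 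Modulo these points, the corollary is a direct consequence of Lemma~1.3 via the standard time-function argument from Lorentzian geometry (cf.\ \cite{ON}, \cite{Beem}).
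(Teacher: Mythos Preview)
Your approach is essentially the paper's: the paper simply invokes a known fact (from the author's earlier work) that any trajectory of a t.f.d.\ field $\nabla_H f$ with $g(\nabla_H f,\nabla_H f)=\mathrm{const}$ on $U$ is a unique $U$-maximizer, applied here with $f=-R_1$ via Lemma~2.2. Your reversed Cauchy--Schwarz argument is precisely the standard proof of that cited fact, so you are unpacking what the paper treats as a black box (the worry about competitors leaving $S_1^+$ is genuine but equally implicit in the paper's citation, and note your bound should read $L(\eta)\le R_1(\eta(b))-R_1(\eta(a))$, since $R_1$ increases along nspc.f.d.\ curves).
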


\begin{proof}
As it was mentioned in the previous papers by the author, every trajectory
of a t.f.d. field of the form $\nabla _{H}f$, where $f$ is a smooth function
defined on an open set $U$ and such that $g(\nabla _{H}f,\nabla _{H}f)=const$
on $U$, is a unique $U$-maximizer.
\end{proof}

\subsection{Construction of normal forms.}

Using what we have said in the proof of lemma 2.2, there exists an
orthonormal frame $F,G$ for $(H,g)$, defined on $U\backslash \left\{
R_{i}=0\right\} =S_{1}\cup S_{2}$ with $F$ being a timelike field, which is
of the form 
\begin{equation}
F=\frac{x}{R_{1}}\frac{\partial }{\partial x}+\frac{y}{R_{1}}\frac{\partial 
}{\partial y}\text{, \ \ }G=a_{11}\frac{\partial }{\partial z}+a_{21}\frac{%
\partial }{\partial w}+\left( b_{1}+\frac{1}{R_{1}}\right) \left( y\frac{%
\partial }{\partial x}+x\frac{\partial }{\partial y}\right)  \label{Fr1}
\end{equation}
on $S_{1}$, and 
\begin{equation}
F=a_{12}\frac{\partial }{\partial z}+a_{22}\frac{\partial }{\partial w}%
+\left( b_{2}+\frac{1}{R_{2}}\right) \left( y\frac{\partial }{\partial x}+x%
\frac{\partial }{\partial y}\right) \text{, \ \ \ }G=\frac{x}{R_{2}}\frac{%
\partial }{\partial x}+\frac{y}{R_{2}}\frac{\partial }{\partial y}
\label{Fr2}
\end{equation}
on $S_{2}$, where $a_{ji}$, $b_{i}$ are smooth on $S_{i}$, $i,j=1,2$.
Indeed, first let us remark here that although all calculations in lemma 2.2
were carried out on $S_{1}^{+}$, it is not difficult to extend them to $%
S_{1}^{-}$. For instance, since $\gamma (s)=(-s\cosh \varphi ,-s\sinh
s,z_{0},w_{0})\in S_{1}^{-}$ is a timelike geodesic, the vector $\left( 
\frac{\partial }{\partial R_{1}}\right) _{\gamma (s)}=\frac{x}{R_{1}}\frac{%
\partial }{\partial x}+\frac{y}{R_{1}}\frac{\partial }{\partial y}=\dot{%
\gamma}(s)$ is also timelike on $S_{1}^{-}$. Secondly, the formula (\ref{Fr2}%
) on $S_{2}$ follows from (\ref{Fr1}) valid on $S_{1}$ by replacing the
metric $(H,g)$ with $(H,-g)$.

Below we prove the following

\begin{proposition}
There exist functions $A_{1},A_{2}\in C^{\infty }(U)$ such that\newline
$A_{1}=\left\{ 
\begin{array}{l}
\frac{a_{11}}{R_{1}}\;\text{on }S_{1} \\ 
-\frac{a_{12}}{R_{2}}\;\text{on }S_{2}%
\end{array}
\right. $ and $A_{2}=\left\{ 
\begin{array}{l}
\frac{a_{21}}{R_{1}}\;\text{on }S_{1} \\ 
-\frac{a_{22}}{R_{2}}\;\text{on }S_{2}%
\end{array}
\right. $.
\end{proposition}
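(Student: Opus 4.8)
The plan is to reduce the claim to a single divisibility statement by comparing the singular frame $F,G$ with a genuinely smooth orthonormal frame for $H$.

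First I would fix a smooth orthonormal frame $X_{1},Y_{1}$ for $(H,g)$ on $U$, with $X_{1}$ timelike and future directed (for instance $X_{1}=\tilde{X}$, $Y_{1}=\tilde{Y}$); this exists because $(H,g)$ is smooth. The crucial observation is that the field $E:=x\tfrac{\partial}{\partial x}+y\tfrac{\partial}{\partial y}$ is smooth on $U$ and, by (\ref{Fr1}) and (\ref{Fr2}), equals $R_{1}F$ on $S_{1}$ and $R_{2}G$ on $S_{2}$; in particular $E$ is horizontal on the dense set $S_{1}\cup S_{2}$, hence on all of $U$. Writing $E=pX_{1}+qY_{1}$ with $p,q\in C^{\infty}(U)$ and comparing on $S_{1}\cup S_{2}$, where $(F,G)$ and $(X_{1},Y_{1})$ differ by a hyperbolic rotation, one obtains $p^{2}-q^{2}=R_{1}^{2}=x^{2}-y^{2}$ on $S_{1}$ and $p^{2}-q^{2}=-R_{2}^{2}=x^{2}-y^{2}$ on $S_{2}$, so $p^{2}-q^{2}=x^{2}-y^{2}$ throughout $U$.

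Since $dz$ annihilates $\tfrac{\partial}{\partial w}$ and $y\tfrac{\partial}{\partial x}+x\tfrac{\partial}{\partial y}$, (\ref{Fr1}) and (\ref{Fr2}) give $a_{11}=dz(G)$ on $S_{1}$ and $a_{12}=dz(F)$ on $S_{2}$. Expressing $F$ and $G$ in the frame $X_{1},Y_{1}$ through the boost that links the two orthonormal frames, and using $E=pX_{1}+qY_{1}$ together with $R_{i}^{2}=\pm(x^{2}-y^{2})$, one finds, up to a locally constant sign discussed below, that $a_{11}/R_{1}=\Psi/(x^{2}-y^{2})$ on $S_{1}$ and $-a_{12}/R_{2}=\Psi/(x^{2}-y^{2})$ on $S_{2}$, where $\Psi:=q\,dz(X_{1})+p\,dz(Y_{1})\in C^{\infty}(U)$. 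Thus the problem becomes: show that $\Psi$ is divisible by $x^{2}-y^{2}$ in $C^{\infty}(U)$ (and that the sign is harmless).

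For the divisibility, note that $dz(E)\equiv0$ since $E$ has no $\partial/\partial z$-component; hence $p\,dz(X_{1})+q\,dz(Y_{1})\equiv0$, and a short computation gives $\Psi^{2}-(dz(E))^{2}=(q^{2}-p^{2})\bigl((dz(X_{1}))^{2}-(dz(Y_{1}))^{2}\bigr)$. On the cone $\mathcal{C}:=\{x^{2}=y^{2}\}$ we have $p^{2}=q^{2}$ by the first step, so $\Psi^{2}=0$ on $\mathcal{C}$, i.e.\ $\Psi$ vanishes on $\mathcal{C}$. As $\mathcal{C}=\{x=y\}\cup\{x=-y\}$ is a union of two hyperplanes meeting along a set of codimension two, two applications of Hadamard's lemma yield $\Psi=(x^{2}-y^{2})\Psi_{2}$ with $\Psi_{2}\in C^{\infty}(U)$; one then sets $A_{1}:=\Psi_{2}$. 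The function $A_{2}$ is obtained in exactly the same way with $dw$ in place of $dz$ (note that $dw(E)\equiv0$ too).

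The only delicate point is to check that the locally constant signs appearing above are all equal, so that $a_{11}/R_{1}$ on $S_{1}$ and $-a_{12}/R_{2}$ on $S_{2}$ are literally the restrictions of $\Psi_{2}$, not of $\pm\Psi_{2}$ on the various components $S_{1}^{\pm},S_{2}^{\pm}$. These four components are glued along pieces of $\mathcal{C}$ in the pattern of the connected graph $K_{2,2}$, so it is enough to match signs across one piece of $\mathcal{C}$ at a time. To do so I would track, on each component, the causal type of $F$ (future directed on $S_{1}^{+}$, past directed on $S_{1}^{-}$, with the corresponding behaviour on $S_{2}^{\pm}$) against the limiting directions of $F$ and $G$ on $\mathcal{C}$: there both fields blow up along one of the null directions $\tfrac{\partial}{\partial x}\pm\tfrac{\partial}{\partial y}$, which correspond to the null vectors $X_{1}\pm Y_{1}$, and matching these limits forces the sign factors to agree. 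I expect this sign bookkeeping to be the main obstacle; the conceptual core, namely the vanishing of $\Psi$ on $\mathcal{C}$, is immediate from $dz(E)\equiv0$ and $p^{2}=q^{2}$ on $\mathcal{C}$.
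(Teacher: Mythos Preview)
Your approach is genuinely different from the paper's. The paper never introduces an auxiliary smooth frame or invokes Hadamard's lemma; instead it exploits that the geodesic Hamiltonian $\mathcal H$ is globally smooth and frame-independent, so no orientation ambiguity can arise. Evaluating $2\mathcal H$ at $(p_x,p_y,p_z,p_w)=(0,0,1,0)$ and $(0,0,0,1)$ shows that the functions $\tilde a_j$ (equal to $a_{j1}^2$ on $S_1$ and $-a_{j2}^2$ on $S_2$) glue smoothly (Lemma~2.3); the Minkowski Gram determinant $\mathcal G$ of an orthonormal frame then gives that $\tilde d:=b_iR_i$ glues smoothly with $\tilde d+1>0$; finally $\partial\mathcal H/\partial p_z|_{p_w=0}$ isolates $A_1(\tilde d+1)(yp_x+xp_y)$ as a smooth expression, and specializing $(p_x,p_y)=(1,0)$ and $(0,1)$ yields $yA_1,xA_1\in C^\infty(U)$, hence $A_1\in C^\infty(U)$.

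Your route via $E=x\partial_x+y\partial_y$ and Hadamard is correct in outline: $dz(E)\equiv0$ together with $p^2-q^2=x^2-y^2$ does force $\Psi=q\,dz(X_1)+p\,dz(Y_1)$ to vanish on $\mathcal C$, so $\Psi=(x^2-y^2)\Psi_2$ with $\Psi_2$ smooth. The gap is precisely the sign you flag. Writing the relation between $(F,G)$ and $(X_1,Y_1)$ as an element $L\in O(1,1)$, one gets $a_{11}/R_1=(\det L)\,\Psi_2$ on $S_1$ and $-a_{12}/R_2=(\det L)\,\Psi_2$ on $S_2$, with $\det L=\pm1$ locally constant on the four components. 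Your limiting-direction argument does not determine $\det L$: on each piece of $\mathcal C$ the rescaled fields $R_iF$ and $R_iG$ tend to \emph{proportional} null vectors (because $E$ and $E':=y\partial_x+x\partial_y$ become parallel on $\mathcal C$), so the limit carries no orientation information. A clean fix is to observe $\det L=\Omega(F,G)/\Omega(X_1,Y_1)$ for any nonvanishing $\Omega\in\Lambda^2H^*$; with $\Omega=dx\wedge dy|_H$ (nondegenerate near $0$ since $H_{|\Gamma}=\ker dz\cap\ker dw$), formulas (\ref{Fr1})--(\ref{Fr2}) give $\Omega(F,G)=b_iR_i+1$. Showing this has constant sign on $S_1\cup S_2$ is exactly the paper's $\mathcal G$--computation for $\tilde d$, so your argument can be completed, but not without that step.
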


The proof is similar to the proof of analogous result in \cite{g2}. On each
of the sets $S_{1}$, $S_{2}$ we will write the Hamiltonian $\mathcal{H}$,
which is the smooth function on the whole $U$. So on $S_{1}$ we have 
\begin{equation*}
2\mathcal{H}\left( x,y,z,w,p_{x},p_{y},p_{z},p_{w}\right) =-\left( \frac{x}{%
R_{1}}p_{x}+\frac{y}{R_{1}}p_{y}\right) ^{2}+\left(
a_{11}p_{z}+a_{21}p_{w}+\left( b_{1}+\frac{1}{R_{1}}\right) \left(
yp_{x}+xp_{y}\right) \right) ^{2}=
\end{equation*}%
\begin{equation*}
-p_{x}^{2}+p_{y}^{2}+\left( a_{11}p_{z}+a_{21}p_{w}\right) ^{2}+\frac{b_{1}}{%
R_{1}}\left( b_{1}R_{1}+2\right) \left( yp_{x}+xp_{y}\right) ^{2}+\frac{2}{%
R_{1}}\left( b_{1}R_{1}+1\right) \left( yp_{x}+xp_{y}\right) \left(
a_{11}p_{z}+a_{21}p_{w}\right) \,\text{,}
\end{equation*}%
while on $S_{2}$ we can write 
\begin{equation*}
2\mathcal{H}\left( x,y,z,w,p_{x},p_{y},p_{z},p_{w}\right) =-\left(
a_{12}p_{z}+a_{22}p_{w}+\left( b_{2}+\frac{1}{R_{2}}\right) \left(
yp_{x}+xp_{y}\right) \right) ^{2}+\left( \frac{x}{R_{2}}p_{x}+\frac{y}{R_{2}}%
p_{y}\right) ^{2}=
\end{equation*}%
\begin{equation*}
-p_{x}^{2}+p_{y}^{2}-\left( a_{12}p_{z}+a_{22}p_{w}\right) ^{2}-\frac{b_{2}}{%
R_{2}}\left( b_{1}R_{1}+2\right) \left( yp_{x}+xp_{y}\right) ^{2}-\frac{2}{%
R_{2}}\left( b_{2}R_{2}+1\right) \left( yp_{x}+xp_{y}\right) \left(
a_{12}p_{z}+a_{22}p_{w}\right) \text{.}
\end{equation*}

\begin{lemma}
The exist smooth function $\tilde{a}_{1},\tilde{a}_{2}:U\longrightarrow 
\mathbb{R}$ such that \newline
$\tilde{a}_{1}=\left\{ 
\begin{array}{l}
a_{11}^{2}\;\text{on }S_{1} \\ 
-a_{12}^{2}\;\text{on }S_{2}%
\end{array}
\right. $ and $\ \ \tilde{a}_{2}=\left\{ 
\begin{array}{l}
a_{21}^{2}\;\text{on }S_{1} \\ 
-a_{22}^{2}\;\text{on }S_{2}%
\end{array}
\right. $.\newline
In particular, $\tilde{a}_{1|R_{i}=0}=\tilde{a}_{2|R_{i}=0}=0$.
\end{lemma}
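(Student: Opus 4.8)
The plan is to recover $\tilde a_{1}$ and $\tilde a_{2}$ as second-order partial derivatives, in the momentum directions $p_{z}$ and $p_{w}$, of the geodesic Hamiltonian $\mathcal{H}$, exploiting the fact that $\mathcal{H}$ is a globally smooth function on $T^{\ast }U$ which is fibrewise a quadratic polynomial in $(p_{x},p_{y},p_{z},p_{w})$.

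Concretely, I would set $\tilde a_{1}:=\partial^{2}\mathcal{H}/\partial p_{z}^{\,2}$ and $\tilde a_{2}:=\partial^{2}\mathcal{H}/\partial p_{w}^{\,2}$. Since $\mathcal{H}$ is smooth on $T^{\ast }U$ and fibrewise quadratic, each of these second derivatives is constant along the fibres of $T^{\ast }U\to U$, hence defines a genuine smooth function on $U$; this gives the required regularity with no further work. It then remains only to identify these functions on $S_{1}$ and $S_{2}$ from the two explicit expansions of $2\mathcal{H}$ displayed just above the lemma. On $S_{1}$ the expansion reads $2\mathcal{H}=-p_{x}^{2}+p_{y}^{2}+(a_{11}p_{z}+a_{21}p_{w})^{2}+\dots$, and among its four summands only $(a_{11}p_{z}+a_{21}p_{w})^{2}$ involves $p_{z}^{2}$ or $p_{w}^{2}$: the remaining three are either independent of $(p_{z},p_{w})$ or linear in them. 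Hence on $S_{1}$ one reads off $\partial^{2}\mathcal{H}/\partial p_{z}^{\,2}=a_{11}^{2}$ and $\partial^{2}\mathcal{H}/\partial p_{w}^{\,2}=a_{21}^{2}$. Repeating the inspection with the $S_{2}$ expansion $2\mathcal{H}=-p_{x}^{2}+p_{y}^{2}-(a_{12}p_{z}+a_{22}p_{w})^{2}-\dots$ yields $\partial^{2}\mathcal{H}/\partial p_{z}^{\,2}=-a_{12}^{2}$ and $\partial^{2}\mathcal{H}/\partial p_{w}^{\,2}=-a_{22}^{2}$ on $S_{2}$, which is exactly the asserted description of $\tilde a_{1}$ and $\tilde a_{2}$.

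For the final (``in particular'') assertion I would invoke continuity of $\tilde a_{1},\tilde a_{2}$ on $U$. The set $\{R_{i}=0\}=\{|x|=|y|\}$ lies in the closure, relative to $U$, of both $S_{1}$ and $S_{2}$; since $\tilde a_{1}$ equals $a_{11}^{2}\ge 0$ on $S_{1}$ but $-a_{12}^{2}\le 0$ on $S_{2}$, at each point of $\{R_{i}=0\}$ its value is simultaneously a limit of nonnegative numbers and a limit of nonpositive numbers, so it must vanish; the same argument applies to $\tilde a_{2}$.

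I expect no serious obstacle here: the whole argument is essentially a term-by-term reading of the two displayed formulas for $2\mathcal{H}$. The only point deserving an explicit line of justification is that a second momentum-derivative of a fibrewise-quadratic smooth Hamiltonian descends to a well-defined smooth function on the base $U$, which is immediate; after that, matching monomial coefficients and the elementary sign argument on $\{R_{i}=0\}$ complete the proof.
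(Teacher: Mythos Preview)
Your proposal is correct and essentially the same as the paper's argument. The paper defines $\tilde a_{1}(q)=2\mathcal{H}(q,0,0,1,0)$ and $\tilde a_{2}(q)=2\mathcal{H}(q,0,0,0,1)$, i.e.\ it extracts the diagonal $p_{z}^{2}$ and $p_{w}^{2}$ coefficients of the fibrewise quadratic $\mathcal{H}$ by evaluation at unit covectors rather than by taking second partial derivatives; since $\mathcal{H}$ is homogeneous quadratic in the momenta these two operations yield exactly the same smooth functions on $U$, and the identification on $S_{1}$, $S_{2}$ and the vanishing on $\{R_{i}=0\}$ follow just as you describe.
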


\begin{proof}
Indeed, using above formulas we have 
\begin{equation*}
2\mathcal{H}\left( x,y,z,w,0,0,1,0\right) =a_{11}^{2}\text{, \ \ }2\mathcal{H%
}\left( x,y,z,w,0,0,0,1\right) =a_{21}^{2}
\end{equation*}%
on $S_{1}$ and 
\begin{equation*}
2\mathcal{H}\left( x,y,z,w,0,0,1,0\right) =-a_{12}^{2}\text{, \ \ }2\mathcal{%
H}\left( x,y,z,w,0,0,0,1\right) =-a_{22}^{2}
\end{equation*}%
on $S_{2}$. Thus it is enough to define $\tilde{a}_{1}\left( x,y,z,w\right)
=2\mathcal{H}\left( x,y,z,w,0,0,1,0\right) $, $\tilde{a}_{2}\left(
x,y,z,w\right) =2\mathcal{H}\left( x,y,z,w,0,0,0,1\right) $.
\end{proof}

Next let $\left\langle \cdot ,\cdot \right\rangle $ be the Minkowski scalar
product on $U$, i.e. the one induced by the Lorentzian metric on $U$ defined
by supposing the basis $\frac{\partial }{\partial x},\frac{\partial }{%
\partial y},\frac{\partial }{\partial z},\frac{\partial }{\partial w}$ to be
orthonormal with the time orientation $\frac{\partial }{\partial x}$. For an
orthonormal basis $X,Y$ of $(H,g)$ let $\mathcal{G}=\det \left( 
\begin{array}{cc}
\left\langle X,X\right\rangle & \left\langle X,Y\right\rangle \\ 
\left\langle X,Y\right\rangle & \left\langle Y,Y\right\rangle%
\end{array}%
\right) $. Note that since the matrices from the Lorentz group have
determinant equal to $\pm 1$, $\mathcal{G}$ is independent of the choice of
the orthonormal frame $X,Y$. Clearly $\mathcal{G}$ is a smooth function on $%
U $; we will compute the values of $\mathcal{G}_{i}=\mathcal{G}_{|S_{i}}$, $%
i=1,2$. So 
\begin{equation*}
\mathcal{G}_{1}=-a_{11}^{2}-a_{21}^{2}-\left( b_{1}R_{1}+1\right) ^{2}=-%
\tilde{a}_{1|S_{1}}-\tilde{a}_{2|S_{1}}-\left( b_{1}R_{1}+1\right) ^{2}\text{%
,}
\end{equation*}%
and 
\begin{equation*}
\mathcal{G}_{2}=a_{12}^{2}+a_{22}^{2}-\left( b_{2}R_{2}+1\right) ^{2}=-%
\tilde{a}_{1|S_{2}}-\tilde{a}_{2|S_{2}}-\left( b_{2}R_{2}+1\right) ^{2}\text{%
.}
\end{equation*}%
In particular, for $R_{i}=0$ we get, by lemma 2.3, $\mathcal{G}_{i}=-1$
which means that $\mathcal{G}$ is negative on $U$. Therefore 
\begin{equation*}
b_{i}R_{i}+1=\sqrt{-\tilde{a}_{1|S_{i}}-\tilde{a}_{2|S_{i}}-\mathcal{G}_{i}}%
\text{, \ \ }i=1,2\text{.}
\end{equation*}%
Let $\tilde{d}=\left\{ 
\begin{array}{l}
b_{1}R_{1}\;\text{on }S_{1} \\ 
b_{2}R_{2}\;\text{on }S_{2} \\ 
0\text{ on }\left\{ R_{i}=0\right\}%
\end{array}%
\right. $. Then simply $\tilde{d}=\sqrt{-\tilde{a}_{1}-\tilde{a}_{2}-%
\mathcal{G}}-1$, so $\tilde{d}\in C^{\infty }(U)$ since the expression under
the root does not vanish. Now if we set $\mathcal{H}_{i}=\mathcal{H}%
_{|T_{S_{i}}^{\ast }\mathbb{R}^{4}}$, $i=1,2$, then 
\begin{equation}
\frac{\partial \mathcal{H}_{1}}{\partial p_{z}}|_{p_{w}=0}=p_{z}\tilde{a}%
_{1|S_{1}}+\frac{a_{11}}{R_{1}}\left( \tilde{d}+1\right) \left(
yp_{x}+xp_{y}\right) \text{, }  \label{r1}
\end{equation}%
and similarly 
\begin{equation}
\frac{\partial \mathcal{H}_{2}}{\partial p_{z}}|_{p_{w}=0}=p_{z}\tilde{a}%
_{1|S_{2}}-\frac{a_{12}}{R_{2}}\left( \tilde{d}+1\right) \left(
yp_{x}+xp_{y}\right) \text{.}  \label{r2}
\end{equation}%
Now let $A_{1}$ be as in the hypotheses of proposition 2.1. Then (\ref{r1})
and (\ref{r2}) become 
\begin{equation*}
\frac{\partial \mathcal{H}_{1}}{\partial p_{z}}|_{p_{w}=0}=p_{z}\tilde{a}%
_{1|S_{1}}+A_{1}\left( \tilde{d}+1\right) \left( yp_{x}+xp_{y}\right)
\end{equation*}%
and 
\begin{equation*}
\frac{\partial \mathcal{H}_{2}}{\partial p_{z}}|_{p_{w}=0}=p_{z}\tilde{a}%
_{1|S_{2}}+A_{1}\left( \tilde{d}+1\right) \left( yp_{x}+xp_{y}\right)
\end{equation*}%
and all terms, apart from $A_{1}$ perhaps, are smooth on the whole $U$.
Since $\tilde{d}+1\neq 0$, it follows that $A_{1}\left( yp_{x}+xp_{y}\right) 
$ is smooth. Setting $p_{x}=1$, $p_{y}=0$ and then $p_{x}=0$, $p_{y}=1$ we
arrive at $xA_{1},yA_{1}\in C^{\infty }(U)$. But this means $A_{1}\in
C^{\infty }(U)$ as it was stated.

Now let $A_{2}$ be defined as in the hypotheses of proposition 2.1.
Considering this time derivatives $\frac{\partial \mathcal{H}_{i}}{\partial
p_{w}}|_{p_{z}=0}$, $i=1,2$, we are led to 
\begin{equation*}
\frac{\partial \mathcal{H}_{1}}{\partial p_{w}}|_{p_{z}=0}=p_{w}\tilde{a}%
_{2}{}_{|S_{1}}+A_{2}\left( \tilde{d}+1\right) \left( yp_{x}+xp_{y}\right)
\end{equation*}
and 
\begin{equation*}
\frac{\partial \mathcal{H}_{2}}{\partial p_{w}}|_{p_{z}=0}=p_{w}\tilde{a}%
_{2|S_{2}}+A_{2}\left( \tilde{d}+1\right) \left( yp_{x}+xp_{y}\right)
\end{equation*}
which results in smoothness of $A_{2}$. The proof of proposition 2.1 is over.

\bigskip

\begin{proposition}
There exists a function $B\in C^{\infty }(U)$ such that $B=\left\{ 
\begin{array}{c}
\frac{b_{1}}{R_{1}}\;\text{on }S_{1} \\ 
-\frac{b_{2}}{R_{2}}\;\text{on }S_{2}%
\end{array}
\right. $.
\end{proposition}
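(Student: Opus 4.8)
The plan is to mimic exactly the argument used to establish smoothness of $A_{1}$ and $A_{2}$ in Proposition 2.1, but now extracting the coefficient $b_{i}$ of the vector field $y\,\partial/\partial x + x\,\partial/\partial y$ rather than the coefficients of $\partial/\partial z$ and $\partial/\partial w$. The key point is that $b_i$ enters the expression for $\mathcal{H}$ through the already-identified smooth function $\tilde d$, via $\tilde d = b_i R_i$ on $S_i$, together with the purely quadratic contribution $\frac{b_i}{R_i}(b_i R_i + 2)(yp_x+xp_y)^2$. Since $\tilde d \in C^{\infty}(U)$ has already been constructed (as $\tilde d = \sqrt{-\tilde a_1 - \tilde a_2 - \mathcal{G}} - 1$), the quantity $\tilde d + 1 = b_i R_i + 1$ is smooth and nonvanishing, and I should produce $B = b_i/R_i$ from a derivative of $\mathcal{H}$ in which it appears multiplied by something smooth and nonzero.

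Concretely, I would compute $\frac{\partial^2 \mathcal{H}_i}{\partial p_x^2}$ (or equivalently look at the coefficient of $p_x^2$, using that near $R_i=0$ the variable $y$ is small relative to $x$ so that $x$ stays away from zero on, say, $S_1$; symmetric remarks hold with $x\leftrightarrow y$ on $S_2$). From the displayed expansions of $2\mathcal{H}_1$ and $2\mathcal{H}_2$, differentiating twice in the appropriate momentum variable and setting the other momenta to zero isolates a term of the form $\frac{b_i}{R_i}(b_iR_i+2)\,(\text{smooth, nonvanishing factor})$ plus terms already known to be smooth on all of $U$. Writing $\frac{b_i}{R_i}(b_iR_i+2) = B(\tilde d + 2)$ with $\tilde d + 2$ smooth and nonvanishing (it equals $b_iR_i+2$, which is near $2$), one concludes that $B\cdot(\text{smooth nonzero})$ is smooth, hence $B$ is smooth on $S_1\cup S_2$. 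One still has to check the signs match up across $S_1$ and $S_2$: the sign flip in the definition of $B$ (with the minus on $S_2$) should come out automatically, exactly as the analogous sign flips did for $A_1,A_2$, because passing from $S_1$ to $S_2$ corresponds to replacing $g$ by $-g$, which flips the overall sign of $2\mathcal{H}$ and hence of the coefficient being extracted.

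The main obstacle I anticipate is bookkeeping: making sure that after differentiating and setting momenta to particular values, the only surviving "bad" (a priori non-smooth) term is a single scalar multiple of $B$ times a manifestly smooth nonvanishing function, with everything else expressible through $\tilde a_1, \tilde a_2, \mathcal{G}, \tilde d, A_1, A_2$ — all of which are already known to be smooth on $U$. In particular one must be careful that cross terms like $\frac{1}{R_i}(b_iR_i+1)(yp_x+xp_y)(a_{i1}p_z+a_{i2}p_w)$ are handled by choosing $p_z=p_w=0$, so that only the $(yp_x+xp_y)^2$ term contributes, and then argue as in the proof of Proposition 2.1 (setting $(p_x,p_y)=(1,0)$ and $(0,1)$, and using that $x$, resp.\ $y$, is bounded away from zero on the relevant region, or else that $xB, yB \in C^\infty(U)$ forces $B\in C^\infty(U)$). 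Once the correct combination is identified, the argument is a direct transcription of the technique already in place, and no genuinely new idea is needed beyond reusing the smoothness of $\tilde d$.
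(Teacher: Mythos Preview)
Your proposal is correct and follows essentially the same approach as the paper: isolate from the expansion of $2\mathcal{H}$ the term $B(\tilde d+2)(yp_x+xp_y)^2$, use that every other term is already known to be smooth on $U$ and that $\tilde d+2\neq 0$, and then conclude (via $xB,yB\in C^\infty(U)$, or more precisely $x^2B,\,y^2B,\,xyB\in C^\infty(U)$) that $B\in C^\infty(U)$. The only cosmetic difference is that the paper evaluates at $(p_z,p_w)=(1,0)$, so the already-smooth cross term $2A_1(\tilde d+1)(yp_x+xp_y)$ appears and is subtracted off, whereas your choice $p_z=p_w=0$ avoids that term altogether and is in fact slightly cleaner.
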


\begin{proof}
Using the above formulas we can write on $S_{1}$%
\begin{equation*}
2\mathcal{H}\left( x,y,z,w,p_{x},p_{y},1,0\right) =-p_{x}^{2}+p_{y}^{2}+%
\tilde{a}_{1|S_{1}}+B\left( \tilde{d}+2\right) \left( yp_{x}+xp_{y}\right)
^{2}+2A_{1}\left( \tilde{d}+1\right) \left( yp_{x}+xp_{y}\right)
\end{equation*}%
and 
\begin{equation*}
2\mathcal{H}\left( x,y,z,w,p_{x},p_{y},1,0\right) =-p_{x}^{2}+p_{y}^{2}+%
\tilde{a}_{1|S_{2}}+B\left( \tilde{d}+2\right) \left( yp_{x}+xp_{y}\right)
^{2}+2A_{1}\left( \tilde{d}+1\right) \left( yp_{x}+xp_{y}\right)
\end{equation*}%
on $S_{2}$. Since all terms, perhaps apart from $B$, are smooth on $U$, and $%
\tilde{d}+2\neq 0$, we again arrive at $xB,yB\in C^{\infty }(U)$, which in
turn gives $B\in C^{\infty }(U)$.
\end{proof}

To conclude our considerations, similarly as in \cite{g2}, we change our
frame $F,G$ as follows: 
\begin{equation}
F\longrightarrow \frac{x}{R_{1}}F-\frac{y}{R_{1}}G\text{, \ \ }%
G\longrightarrow -\frac{y}{R_{1}}F+\frac{x}{R_{1}}G  \label{r3}
\end{equation}
on $S_{1}$ and 
\begin{equation}
F\longrightarrow \frac{y}{R_{2}}F-\frac{x}{R_{2}}G\text{, \ \ }%
G\longrightarrow -\frac{x}{R_{2}}F+\frac{y}{R_{2}}G  \label{r4}
\end{equation}
on $S_{2}$; note that both, the frame $F,G$ and our change are singular on $%
\left\{ R_{i}=0\right\} $. Carrying out calculations as indicated in (\ref%
{r3}) and (\ref{r4}), we obtain the following pre-normal form for our
structure 
\begin{equation}
\left. 
\begin{array}{c}
X=\dfrac{\partial }{\partial x}-yB\left( y\dfrac{\partial }{\partial x}+x%
\dfrac{\partial }{\partial y}\right) -yA_{1}\dfrac{\partial }{\partial z}%
-yA_{2}\dfrac{\partial }{\partial w} \\ 
Y=\dfrac{\partial }{\partial y}+xB\left( y\dfrac{\partial }{\partial x}+x%
\dfrac{\partial }{\partial y}\right) +xA_{1}\dfrac{\partial }{\partial z}%
+xA_{2}\dfrac{\partial }{\partial w}%
\end{array}
\right.  \label{preNorm}
\end{equation}
where $X$ is a time orientation and $A_{1},A_{2},B$ are smooth around the
origin. The only thing that may require some explanations is the fact that $%
X $ is a time orientation. However it is clear that for a timelike field $X$
to be a time orientation it suffices to be future directed at a single
point, and surely $X=\frac{x}{R_{1}}F-\frac{y}{R_{1}}G$ is future directed
on $S_{1}^{+}$, since $F$ is.

In order to be able to find some additional conditions that can be imposed
on $A_{1}$, $A_{2}$ in (\ref{preNorm}) we have to use our assumptions. First
let us note that, by construction, $\frac{\partial }{\partial w}_{|\Gamma }=%
\frac{\partial }{\partial \tilde{w}}_{|\Gamma }$ is transverse to $H^{2}$,
so $\frac{\partial }{\partial w}$ is transverse to $H^{2}$ on $U$ (recall
that $U$ is sufficiently small).

We compute the commutator of $X$ and $Y$ to be equal to 
\begin{equation*}
\left[ X,Y\right] =I\frac{\partial }{\partial x}+II\frac{\partial }{\partial
y}+III\frac{\partial }{\partial z}+IV\frac{\partial }{\partial w}
\end{equation*}%
where\newline
$I=y\left( 3B+x\frac{\partial B}{\partial x}+y\frac{\partial B}{\partial y}%
+\left( x^{2}-y^{2}\right) B^{2}\right) $, \newline
$II=x\left( 3B+x\frac{\partial B}{\partial x}+y\frac{\partial B}{\partial y}%
+\left( x^{2}-y^{2}\right) B^{2}\right) $, $\newline
III=2A_{1}+x\frac{\partial A_{1}}{\partial x}+y\frac{\partial A_{1}}{%
\partial y}+(x^{2}-y^{2})A_{1}B$, and \newline
$IV=2A_{2}+x\frac{\partial A_{2}}{\partial x}+y\frac{\partial A_{2}}{%
\partial y}+(x^{2}-y^{2})A_{2}B$. \newline
Now, since $X_{|\Gamma }=\frac{\partial }{\partial x}$, $Y_{|\Gamma }=\frac{%
\partial }{\partial y}$, $\dim H_{(0,0,z,w)}^{2}=3$, and as it was noticed $%
\frac{\partial }{\partial w}$ is transverse to $H^{2}$, it follows that $%
III_{|\Gamma }=2A_{1|\Gamma }$ does not vanish. We renormalize the $z$-axis
by making the following change of coordinates: $(x,y,z,w)\longrightarrow
(x,y,\alpha (z,w),w)$, where $\alpha $ solves the equation ($w$ is a
parameter here) 
\begin{equation*}
\alpha (z,w)+z\frac{d}{dz}\alpha (z,w)=\frac{1}{2A_{1}(0,0,z,w)}\text{.}
\end{equation*}%
In this way we keep the form (\ref{preNorm}) and, in the new coordinates, $%
A_{1}(0,0,z,w)=-\frac{1}{2}$. Now setting $\psi _{1}=-2A_{1}-1$ we obtain
proposition 1.1.

Before we proceed we prove the following lemma.

\begin{lemma}
$\frac{\partial }{\partial z}_{|P}$ is tangent to $H^{2}$.
\end{lemma}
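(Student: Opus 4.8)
The plan is to prove the statement first on the axis $\{x=y=0\}\subset P$, by unwinding the construction of the normal coordinates, and then to propagate it to all of $P=\{y=0\}$ along the abnormal curves.

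For the axis I would compare the normal coordinates $x,y,z,w$ with the coordinates $\tilde x,\tilde y,\tilde z,\tilde w$ of Lemma 2.1. By the definition of the normal coordinates, a point with $x=y=0$ is the image under the geodesic flow of the zero covector based at the point with $\tilde x=\tilde y=0$, $\tilde z=z$, $\tilde w=w$; since the geodesic flow fixes the zero section, it is that point itself, so $\{x=y=0\}=\{\tilde x=\tilde y=0\}$ and $z=\tilde z$, $w=\tilde w$ there. Hence along this surface $\partial/\partial z=\partial/\partial\tilde z$. On the other hand, the diffeomorphism $(\tilde x,\tilde y,\tilde z,\tilde w)\mapsto g_{\tilde Y}^{\tilde y}\circ g_{\tilde X}^{\tilde x}\circ g_{[\tilde X,\tilde Y]}^{\tilde z}\sigma(\tilde w)$ defining these coordinates shows that at points with $\tilde x=\tilde y=0$ one has $\partial/\partial\tilde z=[\tilde X,\tilde Y]$. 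Since $[\tilde X,\tilde Y]$ is a section of $H^{2}$, this already gives $\partial/\partial z\in H^{2}$ along $\{x=y=0\}$.

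To pass to all of $P$, note that on $P$ we have $X=\partial/\partial x$, so the trajectories of $X$ lying in $P$ are the lines $\{y=0,\ z=\mathrm{const},\ w=\mathrm{const}\}$, each of which meets $\{x=y=0\}$, and by Proposition 1.1 they are abnormal. In the form of $X$ established so far, every component of $X$ except the first is divisible by $y$, and the first is $1$ plus a multiple of $y^{2}$; therefore $[X,\partial/\partial z]=-\sum_{k}(\partial_{z}X^{k})\,\partial_{k}$ vanishes on $P$, so the flow $g_{X}^{t}$ carries $\partial/\partial z|_{q}$ to $\partial/\partial z|_{g_{X}^{t}(q)}$ for $q\in P$. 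Moreover, since those trajectories are abnormal we have $[X,[X,Y]]\in H^{2}$ along $P$, hence $[X,Z]\in H^{2}$ along $P$ for every section $Z$ of $H^{2}$; as $X$ is tangent to $P$, this implies $g_{X}^{t}$ maps $H^{2}_{q}$ onto $H^{2}_{g_{X}^{t}(q)}$ for $q\in P$. Combining the two, every point of $P$ is $g_{X}^{x}(q)$ for some $q\in\{x=y=0\}$, and there $\partial/\partial z=(g_{X}^{x})_{*}(\partial/\partial z|_{q})\in(g_{X}^{x})_{*}H^{2}_{q}=H^{2}$, which is the assertion. (Equivalently one may argue that the $dz$-component of the abnormal biextremal covector along such a curve is constant by the adjoint equation, and vanishes where the curve meets $\{x=y=0\}$, hence identically.) The delicate points are the matching of the two coordinate systems on $\{x=y=0\}$ and --- the step genuinely using the Engel/abnormality hypothesis --- the invariance of $H^{2}$ under the flow of $X$ along $P$; everything else is routine.
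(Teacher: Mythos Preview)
Your argument is correct and follows the same underlying idea as the paper: reduce to the axis, then propagate along the abnormal trajectories of $X$ in $P$ using that the abnormal biextremal annihilates $H^{2}$. The only real difference is in how the flow-invariance of $\partial/\partial z$ along $P$ is obtained. You compute $[X,\partial/\partial z]|_{P}=0$ from the pre-normal form~(\ref{preNorm}), which is available at this point; the paper instead works in the coordinates $\tilde x,\tilde y,\tilde z,\tilde w$ of Lemma~2.1 and reads off directly from their definition that $(\partial/\partial\tilde z)_{q}=(dg_{\tilde X}^{\tilde x})\,[\tilde X,\tilde Y]$ at every $q\in P$, so the flow-image statement comes for free without appealing to the pre-normal form. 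Then the paper pairs this vector with the abnormal lift $\lambda(t)=(dg_{\tilde X}^{-t})^{\ast}\lambda(0)$ to get $\langle\lambda(t),\partial/\partial\tilde z\rangle=\langle\lambda(0),[\tilde X,\tilde Y]\rangle=0$, which is exactly your parenthetical ``adjoint equation'' alternative. So your final remark is in fact the paper's proof, and your main argument is a mild variant that trades the explicit pushforward formula for a computation in the normal form.
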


\begin{proof}
If we look closer at coordinates $\tilde{x},\tilde{y},\tilde{z},\tilde{w}$
and $x,y,z,w$, it is seen that $\frac{\partial }{\partial z}=\frac{\partial 
}{\partial \tilde{z}}$. Indeed, $\frac{\partial }{\partial z}=\frac{\partial 
\tilde{x}}{\partial z}\frac{\partial }{\partial \tilde{x}}+\frac{\partial 
\tilde{y}}{\partial z}\frac{\partial }{\partial \tilde{y}}+\frac{\partial 
\tilde{z}}{\partial z}\frac{\partial }{\partial \tilde{z}}+\frac{\partial 
\tilde{w}}{\partial z}\frac{\partial }{\partial \tilde{w}}$, and $z=\tilde{z}
$, $w=\tilde{w}$. Therefore it is enough to carry out all computations in
the first set of coordinates. To this end fix a point $q=(\tilde{x},0,\tilde{%
z},\tilde{w})=g_{\tilde{X}}^{\tilde{x}}\circ g_{[\tilde{X},\tilde{Y}]}^{%
\tilde{z}}\sigma (\tilde{w})$ belonging to $P$. Then 
\begin{equation*}
\left( \tfrac{\partial }{\partial \tilde{z}}\right) _{q}=\tfrac{d}{ds}%
|_{s=0}g_{\tilde{X}}^{\tilde{x}}\circ g_{[\tilde{X},\tilde{Y}]}^{\tilde{z}%
+s}\sigma (\tilde{w})=\left( dg_{\tilde{X}}^{\tilde{x}}\right) [\tilde{X},%
\tilde{Y}]_{\left( g_{[\tilde{X},\tilde{Y}]}^{\tilde{z}}\sigma (\tilde{w}%
)\right) }
\end{equation*}%
Now let $\gamma (t)=g_{\tilde{X}}^{t}\circ g_{[\tilde{X},\tilde{Y}]}^{\tilde{%
z}}\sigma (\tilde{w})$, i.e. $\gamma $ is the abnormal curve passing through 
$g_{\left[ X,Y\right] }^{\tilde{z}}\sigma (\tilde{w})$ at time $t=0$. Let
moreover $\lambda (t)$ be an abnormal lift of $\gamma $ satisfying PMP,
Pontriagin maximum principle, that is to say $(\gamma (t),\lambda (t))$ is
an abnormal biextremal. Then clearly $\lambda (t)\in (H_{\gamma
(t)}^{2})^{\perp }\subset T^{\ast }\mathbb{R}^{4}$ where the latter stands
for the annihilator of $H_{\gamma (t)}^{2}$ - cf. \cite{Suss}. This in
particular implies that $H_{\gamma (t)}^{2}=\ker \lambda (t)$. Further, from
the proof of PMP - see \cite{Agr} - it follows that $\lambda (t)=\left( dg_{%
\tilde{X}}^{-t}\right) ^{\ast }\lambda (0)$ for every $t$. Thus, taking all
above-mentioned facts together we obtain 
\begin{eqnarray}
\left\langle \lambda (t),\left( \tfrac{\partial }{\partial \tilde{z}}\right)
_{\gamma (t)}\right\rangle &=&\left\langle \lambda (t),\left( dg_{\tilde{X}%
}^{t}\right) [\tilde{X},\tilde{Y}]\left( g_{[\tilde{X},\tilde{Y}]}^{\tilde{z}%
}\sigma (\tilde{w})\right) \right\rangle =  \label{r5} \\
\left\langle \left( dg_{\tilde{X}}^{t}\right) ^{\ast }\lambda (t),[\tilde{X},%
\tilde{Y}]_{\left( g_{[\tilde{X},\tilde{Y}]}^{\tilde{z}}\sigma (\tilde{w}%
)\right) }\right\rangle &=&\left\langle \lambda (0),[\tilde{X},\tilde{Y}%
]_{\left( g_{[\tilde{X},\tilde{Y}]}^{\tilde{z}}\sigma (\tilde{w})\right)
}\right\rangle =0  \notag
\end{eqnarray}%
which terminates the proof.
\end{proof}

Now let us see what happens on $P$. So \newline
$\left[ X,Y\right] _{|P}=x\left( 3B+x\frac{\partial B}{\partial x}%
+x^{2}B\right) \frac{\partial }{\partial y}+\left( 2A_{1}+x\frac{\partial
A_{1}}{\partial x}+x^{2}A_{1}B\right) \frac{\partial }{\partial z}+\left(
2A_{2}+x\frac{\partial A_{2}}{\partial x}+x^{2}A_{2}B\right) \frac{\partial 
}{\partial w}$, \newline
$X_{|P}=\frac{\partial }{\partial x}$, and\newline
$Y_{|P}=\left( 1+x^{2}B\right) \tfrac{\partial }{\partial y}+xA_{1}\tfrac{%
\partial }{\partial z}+xA_{2}\tfrac{\partial }{\partial w}$,\newline
where $A_{1}$, $A_{2}$, $B$ are evaluated at $(x,0,z,w)$. Calculations give 
\begin{equation*}
\left[ X,Y\right] _{|P}-\frac{x\left( 3B+x\frac{\partial B}{\partial x}%
+x^{2}B\right) }{\left( 1+x^{2}B\right) }Y_{|P}=\frac{B^{2}x^{4}-Bx^{4}+2}{%
Bx^{2}+1}\left( A_{1}\frac{\partial }{\partial z}+A_{2}\frac{\partial }{%
\partial w}\right)
\end{equation*}%
from which it follows that $A_{1}\frac{\partial }{\partial z}+A_{2}\frac{%
\partial }{\partial w}$ is tangent to $H^{2}$ on $P$. But since $\frac{%
\partial }{\partial z}$ is also tangent to $H^{2}$ it follows that $A_{2}%
\frac{\partial }{\partial w}$ is tangent to $H^{2}$ at point of $P$ which is
possible only when $A_{2}(x,0,z,w)=0$ identically. This means that $A_{2}$
may be replaced by $yA_{2}$ for some other smooth function $A_{2}$. Thus we
are led to 
\begin{equation*}
\left. 
\begin{array}{c}
X=\dfrac{\partial }{\partial x}-yB\left( y\dfrac{\partial }{\partial x}+x%
\dfrac{\partial }{\partial y}\right) -yA_{1}\dfrac{\partial }{\partial z}%
-y^{2}A_{2}\dfrac{\partial }{\partial w} \\ 
Y=\dfrac{\partial }{\partial y}+xB\left( y\dfrac{\partial }{\partial x}+x%
\dfrac{\partial }{\partial y}\right) +xA_{1}\dfrac{\partial }{\partial z}%
+xyA_{2}\dfrac{\partial }{\partial w}%
\end{array}%
\right.
\end{equation*}%
with $A_{1}(0,0,z,w)=\frac{1}{2}$.

Let $adX.Y=\left[ X,Y\right] $, and $ad^{k+1}X.Y=\left[ X,ad^{k}X.Y\right] $%
, $k=1,2,...$ Now we will extract some more information about the
commutators of the fields $X$, $Y$.

\begin{lemma}
$ad^{n}X.Y_{|P}$ is tangent to $H_{|P}^{2}$, $n=0,1,2,...$
\end{lemma}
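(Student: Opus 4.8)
The plan is to imitate the proof of Lemma 2.4: the assertion will come out by differentiating, along an abnormal curve, the pairing of the abnormal covector with $ad^{n}X.Y$, once we observe that this pairing already vanishes for $n=0$.

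First I would fix an arbitrary point $q_{0}\in P$ and let $g_{X}^{t}$ denote the (local) flow of $X$. Since $X_{|P}=\frac{\partial }{\partial x}$, the trajectory $\gamma (t)=g_{X}^{t}(q_{0})$ stays in $P$, and by hypothesis it is an abnormal curve for $H$. Choose an abnormal lift $\lambda (t)$ of $\gamma $ satisfying the Pontriagin maximum principle. As recalled in the proof of Lemma 2.4, we then have $\lambda (t)=\left( dg_{X}^{-t}\right) ^{\ast }\lambda (0)$, the curve $\lambda (t)$ never meets the zero section, and $\lambda (t)$ annihilates $H_{\gamma (t)}^{2}$; because $H^{2}$ has rank $3$, this forces $H_{\gamma (t)}^{2}=\ker \lambda (t)$.

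Next I would introduce, for $n=0,1,2,\dots $, the smooth functions $h_{n}(t)=\left\langle \lambda (t),(ad^{n}X.Y)(\gamma (t))\right\rangle $. From $\lambda (t)=\left( dg_{X}^{-t}\right) ^{\ast }\lambda (0)$ one obtains, for an arbitrary vector field $V$, that $\left\langle \lambda (t),V(\gamma (t))\right\rangle =\left\langle \lambda (0),\bigl((g_{X}^{t})^{\ast }V\bigr)(q_{0})\right\rangle $; differentiating in $t$ and using the standard identity $\frac{d}{dt}(g_{X}^{t})^{\ast }V=(g_{X}^{t})^{\ast }[X,V]$ gives $\frac{d}{dt}\left\langle \lambda (t),V(\gamma (t))\right\rangle =\left\langle \lambda (t),[X,V](\gamma (t))\right\rangle $. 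Applying this to $V=ad^{n}X.Y$ and using $[X,ad^{n}X.Y]=ad^{n+1}X.Y$ we get the recursion $h_{n}^{\prime }=h_{n+1}$. On the other hand $h_{0}(t)=\left\langle \lambda (t),Y(\gamma (t))\right\rangle =0$, since $Y(\gamma (t))\in H_{\gamma (t)}\subset H_{\gamma (t)}^{2}=\ker \lambda (t)$. Hence, by induction on $n$, $h_{n}\equiv 0$ for every $n$: if $h_{n}\equiv 0$ then $h_{n+1}=h_{n}^{\prime }\equiv 0$.

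Finally, $h_{n}(t)=0$ means $(ad^{n}X.Y)(\gamma (t))\in \ker \lambda (t)=H_{\gamma (t)}^{2}$, and taking $t=0$ gives $(ad^{n}X.Y)(q_{0})\in H_{q_{0}}^{2}$; since $q_{0}\in P$ was arbitrary, $ad^{n}X.Y_{|P}$ is tangent to $H_{|P}^{2}$ for all $n$. The only points requiring care are the transport formula $\lambda (t)=\left( dg_{X}^{-t}\right) ^{\ast }\lambda (0)$ and the equality $H_{\gamma (t)}^{2}=\ker \lambda (t)$ along abnormal curves, but both were already justified in the proof of Lemma 2.4 (using \cite{Agr} and \cite{Suss}); in particular the argument never needs $ad^{n}X.Y$ to be a section of $H^{2}$, it only follows its values along the abnormal curve.
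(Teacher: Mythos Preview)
Your proof is correct and follows essentially the same approach as the paper: both use the abnormal lift $\lambda(t)$ along $\gamma(t)=g_X^t(q_0)$, the transport law $\lambda(t)=(dg_X^{-t})^{\ast}\lambda(0)$, and the identification $H^2_{\gamma(t)}=\ker\lambda(t)$ to show that $\langle\lambda(0),(ad^{n}X.Y)_{q_0}\rangle=0$ for all $n$. The only cosmetic difference is that the paper extracts these vanishings from the Taylor expansion $(dg_X^{-t})Y_{\gamma(t)}=\sum_{k=0}^{n}\frac{t^{k}}{k!}(ad^{k}X.Y)_{q_0}+o(t^{n})$, whereas you phrase the same computation as the recursion $h_n'=h_{n+1}$ with $h_0\equiv 0$.
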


\begin{proof}
We use similar considerations as in the proof of lemma 2.4. Consider the
abnormal curve $\gamma (t)$ starting from a point $\gamma
(0)=q=(x_{0},0,z_{0},w_{0})$. Let $\lambda (t)$ be the abnormal lift of $%
\gamma $ satisfying PMP; then clearly $H_{\gamma (t)}^{2}=\ker \lambda (t)$
and, again by \cite{Agr}, $\lambda (t)=\left( dg_{X}^{-t}\right) ^{\ast
}\lambda (0)$ for every $t$. Now for each $t$, $\left\vert t\right\vert $
sufficiently small, and every integer $n$ we have 
\begin{equation*}
0=\left\langle \lambda (t),Y_{\gamma (t)}\right\rangle =\left\langle \lambda
(0),\left( dg_{X}^{-t}\right) Y_{q}\right\rangle =\left\langle \lambda
(0),\tsum\nolimits_{k=0}^{n}\frac{t^{k}}{k!}\left( ad^{k}X.Y\right)
_{q}\right\rangle +o(t^{n})\text{,}
\end{equation*}%
and because $\left\langle \lambda (0),Y_{q}\right\rangle =0$, the result
follows by induction.
\end{proof}

As a consequence of lemma 2.4 we know that $X,Y,\left[ X,Y\right] ,\left[ Y,%
\left[ X,Y\right] \right] $ are linearly independent everywhere. We will
examine the $w$-coordinate of $\left[ Y,\left[ X,Y\right] \right] $ on $%
\Gamma $. The most convenient way is to treat $\left[ Y,\left[ X,Y\right] %
\right] $ as an operator: $\left[ Y,\left[ X,Y\right] \right] =-Y^{2}\circ
X+2Y\circ X\circ Y-X\circ Y^{2}$. Only result on $\Gamma $ interests us, so
it is enough to carry out computations as follows:\newline
$Y^{2}\circ X(w)=Y^{2}(-y^{2}A_{2})=-2(1+x^{2}B)^{2}A_{2}+O(y)$,\newline
$Y\circ X\circ Y(w)=Y\circ X(xyA_{2})=(1+x^{2}B)(1-y^{2}B)A_{2}(A_{2}+x\frac{%
\partial A_{2}}{\partial x})+O(y)+O(x)$,\newline
$X\circ Y^{2}(w)=X\circ Y(xyA_{2})=(1+x^{2}B)(1-y^{2}B)(A_{2}+y\frac{%
\partial A_{2}}{\partial y})+O(y)+O(x)$.\newline
Consequently 
\begin{equation}
\left[ Y,\left[ X,Y\right] \right] (w)_{|\Gamma }=2A_{2}(0,0,z,w)\text{.}
\label{r6}
\end{equation}%
(\ref{r6}) does not vanish, so we renormalize the $w$-axis by making the
change $(x,y,z,w)\longrightarrow (x,y,z,\beta (w)w)$, where $\beta $ is a
solves the equation 
\begin{equation*}
\beta (w)+w\frac{d}{dw}\beta (w)=\frac{1}{2A_{2}(0,0,0,w)}\text{.}
\end{equation*}%
In the new coordinates $A_{2}(0,0,0,w)=-\frac{1}{2}$, and to end the proof
we set $\varphi =-B$, $\psi _{i}=-2A_{i}-1$, $i=1,2$.

\subsection{Remarks.}

Having proved theorem 1.1 it is seen why the structure from example 1.1 is
called flat: every structure in the normal form (\ref{NF}) can be regarded
as a perturbation of the flat structure. Moreover (cf. \cite{Bel}) we see
that the flat Engel structure is the nilpotent approximation for general
Engel structures of Hamiltonian type given by (\ref{NF}). In particular, if
we assign weights to coordinates in the following way $weight(x)=weight(y)=1$%
, $weight(z)=2$, $weight(w)=3$, then the fields defining the flat structure
are homogeneous of degree $-1$.

\section{Reachable sets.}

In his previous papers the author managed to find a sort of algorithm
allowing to compute functions describing reachable sets - see \cite{g7}, 
\cite{g8}, \cite{g9}. The case considered in the present paper, however, is
more complicated and the mentioned methods do not work. Therefore one must
content oneself only with certain estimates on the reachable sets.

In section 2 we recalled the definition of the horizontal gradient of a
smooth function. Notice that if $\gamma :[a,b]\longrightarrow U$ is a
nspc.f.d. curve and a smooth function $f$ is such that $\nabla _{H}f$ is
null f.d. on $U$, then $t\longrightarrow f(\gamma (t))$ is nonincreasing.

\subsection{Geometric optimality of abnormal curves.}

Let $(H,g)$ be an Engel sub-Lorentzian structure of Hamiltonian type which
generated on an open set $U\subset \mathbb{R}^{4}$ by the frame in the
normal form 
\begin{equation}
\left. 
\begin{array}{c}
X=\dfrac{\partial }{\partial x}+y\varphi \left( y\dfrac{\partial }{\partial x%
}+x\dfrac{\partial }{\partial y}\right) +\frac{1}{2}y(1+\psi _{1})\dfrac{%
\partial }{\partial z}+\frac{1}{2}y^{2}(1+\psi _{2})\dfrac{\partial }{%
\partial w} \\ 
Y=\dfrac{\partial }{\partial y}-x\varphi \left( y\dfrac{\partial }{\partial x%
}+x\dfrac{\partial }{\partial y}\right) -\frac{1}{2}x(1+\psi _{1})\dfrac{%
\partial }{\partial z}-\frac{1}{2}xy(1+\psi _{2})\dfrac{\partial }{\partial w%
}%
\end{array}%
\right. \text{,}  \label{EngSpec}
\end{equation}%
where we additionally suppose that $\varphi =\varphi (x,y,w)$ and $\psi
_{2}=\psi _{2}(x,y,w)$ i.e. $\varphi $ and $\psi _{2}$ do not depend on $z$.
Consider now a projection $p:\mathbb{R}^{4}\longrightarrow \mathbb{R}^{3}$, $%
p(x,y,z,w)=(x,y,w)$. (\ref{EngSpec}) is mapped by $p$ to the frame 
\begin{equation}
\left. 
\begin{array}{c}
\tilde{X}=\dfrac{\partial }{\partial x}+y\varphi \left( y\dfrac{\partial }{%
\partial x}+x\dfrac{\partial }{\partial y}\right) +\frac{1}{2}y^{2}(1+\psi
_{2})\dfrac{\partial }{\partial w} \\ 
\tilde{Y}=\dfrac{\partial }{\partial y}-x\varphi \left( y\dfrac{\partial }{%
\partial x}+x\dfrac{\partial }{\partial y}\right) -\frac{1}{2}xy(1+\psi _{2})%
\dfrac{\partial }{\partial w}%
\end{array}%
\right.  \label{FrPr}
\end{equation}%
on the open set $\tilde{U}=p(U)\subset \mathbb{R}^{3}$. If $\tilde{H}=Span\{%
\tilde{X},\tilde{Y}\}$ and $\tilde{g}$ is a metric on $\tilde{H}$ defined by
assuming $\tilde{X},\tilde{Y}$ to be an orthonormal frame with a time
orientation $\tilde{X}$, then we obtain the mapping 
\begin{equation*}
p:\left( U,H,g\right) \longrightarrow (\tilde{U},\tilde{H},\tilde{g})
\end{equation*}%
of sub-Lorentzian manifolds with the property that $d_{q}p_{|H_{q}}:H_{q}%
\longrightarrow \tilde{H}_{p(q)}$ is an isometry for every $q\in U$.
Obviously, the image under $p$ of a nspc.f.d. (t.f.d., null f.d.) curve with
respect to $\left( H,g\right) $ is a nspc.f.d. (t.f.d., null f.d.) curve
with respect to $(\tilde{H},\tilde{g})$. Conversely, if $\tilde{\gamma}%
(t)=(x(t),y(t),w(t))$ is a nspc.f.d. (t.f.d., null f.d.) curve on $(\tilde{U}%
,\tilde{H},\tilde{g})$ and $q_{0}\in p^{-1}(\tilde{\gamma}(0))\cap U$, then
there exists exactly one nspc.f.d. (t.f.d., null f.d.) curve $\gamma (t)$ on 
$\left( U,H,g\right) $ such that $p(\gamma (t))=\tilde{\gamma}(t)$, $\gamma
(0)=q_{0}$. Indeed, the $z$-coordinate of $\gamma $ is computed from $\dot{z}%
=\frac{1}{2}(\dot{y}x-x\dot{y})$. One of the immediate consequences of this
reasoning is the relation 
\begin{equation*}
p\left( J^{+}(q_{0},U)\right) =\tilde{J}^{+}(p(q_{0}),\tilde{U})\text{,}
\end{equation*}%
where by $\tilde{J}^{+}(p(q_{0}),\tilde{U})$ we denote the corresponding
reachable set for the structure $(\tilde{H},\tilde{g})$. The other is
enclosed in the proposition below. Recall that $\partial _{U}$ (resp. $%
\partial _{\tilde{U}}$) denotes the boundary with respect to $U$ (resp. to $%
\tilde{U}$).

\begin{proposition}
Suppose that $\tilde{\gamma}:[0,T]\longrightarrow \tilde{U}$, $\tilde{\gamma}%
(0)=\tilde{q}_{0}$, is geometrically optimal, i.e. $\tilde{\gamma}\left(
[0,T]\right) \subset \partial _{\tilde{U}}\tilde{J}^{+}(\tilde{q}_{0},\tilde{%
U})$. Chose $q_{0}\in p^{-1}(\tilde{q}_{0})$, and let $\gamma
:[0,T]\longrightarrow U$, $\gamma (0)=q_{0}$, be the lift described above.
Then $\gamma $ is also geometrically optimal, i.e. $\gamma \left( \lbrack
0,T]\right) \subset \partial _{U}J^{+}(q_{0},U)$.
\end{proposition}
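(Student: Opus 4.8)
The plan is to exploit the fact that the projection $p$ is a fiber bundle with one-dimensional fibers (the $z$-lines), that $p$ restricts to an isometry $H_q \to \tilde H_{p(q)}$, and that the lift of an nspc.f.d.\ curve is unique once its initial point is fixed. The key structural observation I would establish first is that $p$ is both \emph{open} and, in the relevant sense, \emph{proper enough} so that it interacts well with the boundary operator; concretely, for any subset $A\subset \tilde U$ one has $p^{-1}(\partial_{\tilde U} A) = \partial_U\bigl(p^{-1}(A)\bigr)$ because $p$ is an open continuous surjection with connected fibers, hence $p^{-1}(\mathrm{int}\,A)=\mathrm{int}\,p^{-1}(A)$ and $p^{-1}(\mathrm{cl}\,A)=\mathrm{cl}\,p^{-1}(A)$ (the latter using that $U$ is taken small, so that $p:U\to\tilde U$ is a trivial bundle $\tilde U\times(-\varepsilon,\varepsilon)$ after shrinking). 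Applying this with $A=\tilde J^+(\tilde q_0,\tilde U)$ and invoking the already-established identity $p\bigl(J^+(q_0,U)\bigr)=\tilde J^+(p(q_0),\tilde U)$, I would next argue that in fact $J^+(q_0,U)=p^{-1}\bigl(\tilde J^+(\tilde q_0,\tilde U)\bigr)\cap\{\text{correct fiber component}\}$ — here the unique-lift property is what pins down which points of the preimage are actually reachable.

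Spelling that out: given $\tilde q\in\tilde J^+(\tilde q_0,\tilde U)$, pick an nspc.f.d.\ curve $\tilde\eta$ in $\tilde U$ from $\tilde q_0$ to $\tilde q$; by the lifting statement in the excerpt there is a unique nspc.f.d.\ lift $\eta$ in $U$ with $\eta(0)=q_0$, and its endpoint $\eta(1)$ lies in $J^+(q_0,U)$ and projects to $\tilde q$. This shows $p$ maps $J^+(q_0,U)$ \emph{onto} $\tilde J^+(\tilde q_0,\tilde U)$ with the fibers of $p|_{J^+(q_0,U)}$ being (relatively open) intervals — indeed if two lifts of the same $\tilde q$ differing only in $z$ are both reachable, then by concatenating with a reversed lift one produces reachable points filling an interval in the $z$-direction, and conversely the $z$-coordinate of any lift is determined by $\dot z=\tfrac12(\dot y x-x\dot y)$ up to the additive constant $z(0)=z_{q_0}$, so the reachable fiber over $\tilde q$ is a single interval (possibly degenerate). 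Either way, the conclusion I need is the boundary identity $\partial_U J^+(q_0,U)\supseteq p^{-1}\bigl(\partial_{\tilde U}\tilde J^+(\tilde q_0,\tilde U)\bigr)\cap J^+(q_0,U)$: a point of $J^+(q_0,U)$ projecting to the boundary downstairs cannot be interior upstairs, since an interior point would project (by openness of $p$) to an interior point downstairs.

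With that in hand the proof finishes quickly: by hypothesis $\tilde\gamma([0,T])\subset\partial_{\tilde U}\tilde J^+(\tilde q_0,\tilde U)$, and $\gamma$ is by construction an nspc.f.d.\ (indeed t.f.d.) curve in $U$ with $\gamma(0)=q_0$, hence $\gamma([0,T])\subset J^+(q_0,U)$; combining, $\gamma([0,T])\subset p^{-1}\bigl(\partial_{\tilde U}\tilde J^+(\tilde q_0,\tilde U)\bigr)\cap J^+(q_0,U)\subset\partial_U J^+(q_0,U)$, which is exactly the assertion that $\gamma$ is geometrically optimal. I expect the main obstacle to be the purely point-set-topological verification that $p$ commutes with interior and closure — this needs the bundle triviality of $p$ over the small set $U$ and the fact that the fibers are connected (so no spurious boundary is created or destroyed); once the set-theoretic identity $p^{-1}(\partial_{\tilde U}A)=\partial_U p^{-1}(A)$ is pinned down, everything else is a direct consequence of the isometry property and the unique-lifting lemma already proved before the statement.
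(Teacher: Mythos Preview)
Your proposal is correct, and its essential step --- that a point of $J^{+}(q_{0},U)$ lying in the interior would, by openness of $p$ together with $p(J^{+}(q_{0},U))=\tilde J^{+}(\tilde q_{0},\tilde U)$, project to an interior point of $\tilde J^{+}(\tilde q_{0},\tilde U)$ --- is exactly the paper's argument. The paper presents just this contrapositive in two lines; the preimage--boundary identity $p^{-1}(\partial_{\tilde U}A)=\partial_{U}p^{-1}(A)$, the fiber-interval analysis, and the bundle-triviality discussion in your first two paragraphs are not needed, since only the openness of the coordinate projection $p$ and the already-established surjection $p(J^{+}(q_{0},U))=\tilde J^{+}(\tilde q_{0},\tilde U)$ enter.
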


\begin{proof}
Suppose that $\tilde{\gamma}:[0,T]\longrightarrow \tilde{U}$ is
geometrically optimal but $\gamma \left( \lbrack 0,T]\right) \subset int$ $%
J^{+}(q_{0},U)$. Take an open set $V$ in $U$ such that $\gamma (T)\in V$ and 
$V\subset J^{+}(q_{0},U)$. Then $\tilde{\gamma}(T)\in p(V)$ where the latter
set is open and contained in $\tilde{J}^{+}(\tilde{q}_{0},\tilde{U})$. This
contradicts the geometric optimality of $\tilde{\gamma}$ and the proof is
over.
\end{proof}

\begin{corollary}
The abnormal t.f.d. curve starting from the origin is geometrically optimal
for $(U,H,g)$. Consequently, the set $I^{+}(0,U)$ is not open, and $%
N^{+}(0,U)$ is not closed.
\end{corollary}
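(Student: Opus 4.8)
The plan is to reduce everything to Proposition 3.1 together with the flatness/projection setup of Section 3.1. First I would pin down the abnormal curve: in the normal form (\ref{EngSpec}) the field $X$ is a time orientation whose trajectories inside $\{y=0\}$ are the abnormal curves, and the one through the origin is $t\mapsto(t,0,0,0)$, which is t.f.d. Under the projection $p(x,y,z,w)=(x,y,w)$ this maps to $t\mapsto(t,0,0)$, a trajectory of $\tilde X$ in $\{y=0\}\subset\tilde U$. The key point I would then invoke is that $(\tilde U,\tilde H,\tilde g)$ is itself an Engel sub-Lorentzian structure of Hamiltonian type — indeed (\ref{FrPr}) is exactly the normal form from \cite{g9} for the lower-dimensional situation studied there, and the curve $t\mapsto(t,0,0)$ is its abnormal t.f.d. curve through the origin. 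By the results of \cite{g9} (the case where abnormal t.f.d. curves fill a hypersurface), this abnormal curve is geometrically optimal for $(\tilde U,\tilde H,\tilde g)$, i.e. $\tilde\gamma([0,T])\subset\partial_{\tilde U}\tilde J^+(\tilde q_0,\tilde U)$.

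Next I would apply Proposition 3.1 directly: choosing $q_0=0\in p^{-1}(0)$ and letting $\gamma$ be the unique nspc.f.d. lift of $\tilde\gamma$ with $\gamma(0)=0$ — which by uniqueness of the lift is precisely $t\mapsto(t,0,0,0)$ — Proposition 3.1 gives $\gamma([0,T])\subset\partial_U J^+(0,U)$. So the abnormal t.f.d. curve through the origin is geometrically optimal for $(U,H,g)$, which is the first assertion.

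For the consequences about $I^+(0,U)$ and $N^+(0,U)$, I would argue as follows. Recall from the preliminaries that for a normal neighbourhood $U$ one has $J^+(0,U)=cl_U(\mathrm{int}\,I^+(0,U))=cl_U(\mathrm{int}\,N^+(0,U))$, and that $\mathrm{int}\,I^+(0,U)=\mathrm{int}\,J^+(0,U)=\mathrm{int}\,N^+(0,U)$. The geometrically optimal abnormal curve $\gamma$ lies in $\partial_U J^+(0,U)$, hence no point $\gamma(t)$, $t>0$, lies in $\mathrm{int}\,J^+(0,U)=\mathrm{int}\,I^+(0,U)$; but every $\gamma(t)$ with $t>0$ \emph{is} reachable from $0$ by a t.f.d. curve, so $\gamma(t)\in I^+(0,U)\setminus\mathrm{int}\,I^+(0,U)$, showing $I^+(0,U)$ is not open. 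Dually, $\gamma(t)\in\partial_U J^+(0,U)\subset J^+(0,U)=cl_U(\mathrm{int}\,N^+(0,U))$, so $\gamma(t)$ is a limit of points of $N^+(0,U)$; if $N^+(0,U)$ were closed it would contain $\gamma(t)$, i.e. $\gamma(t)$ would be reachable from $0$ by a null f.d. curve — but $\gamma$ is timelike, and a concatenation of a null curve from $0$ to $\gamma(t)$ (if one existed) with the timelike $\gamma|_{[t,T]}$ would produce a nspc.f.d. curve from $0$ to $\gamma(T)$ that can be perturbed into $\mathrm{int}\,J^+$, contradicting $\gamma(T)\in\partial_U J^+(0,U)$; alternatively one just cites the general fact from \cite{g6} that a point on a geometrically optimal timelike curve cannot be joined to the source by a null curve. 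Hence $N^+(0,U)$ is not closed.

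The main obstacle I anticipate is the first paragraph: one must be sure that the projected structure (\ref{FrPr}) genuinely falls under the hypotheses of the earlier papers guaranteeing geometric optimality of the abnormal curve, i.e. that $\tilde H$ is a rank-two distribution on $\mathbb{R}^3$ with the abnormal set a hypersurface and that the Hamiltonian-type condition persists after projection. This should follow because $dp|_{H_q}$ is an isometry onto $\tilde H_{p(q)}$ (stated in the text) and $p$ carries Hamiltonian geodesics to Hamiltonian geodesics and abnormal curves to abnormal curves, so the abnormal t.f.d. Hamiltonian-geodesic property is inherited; but making this rigorous is where the real content sits, the rest being bookkeeping with the relations among $I^+$, $N^+$, $J^+$ recalled in Section 1.
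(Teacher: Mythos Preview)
Your overall strategy is exactly the paper's: project via $p$, use that the abnormal curve downstairs is geometrically optimal by earlier work, and lift via Proposition~3.1; the second part is then a citation to \cite{g6}. However, you misidentify the projected structure. A rank-two distribution on a $3$-manifold cannot be Engel; the frame (\ref{FrPr}) is the normal form for a \emph{Martinet} sub-Lorentzian structure of Hamiltonian type, and the relevant reference is \cite{g8}, not \cite{g9}. This is precisely what the paper invokes: once one recognizes (\ref{FrPr}) as the Martinet normal form of \cite{g8}, the geometric optimality of $t\mapsto(t,0,0)$ is a result already proved there, so no further verification of ``Hamiltonian-type condition persists after projection'' is needed---that is built into the identification.

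Your elaboration of the second part is more than the paper gives (it simply cites \cite{g6}). The argument for $I^{+}(0,U)$ not open is fine. For $N^{+}(0,U)$ not closed, your perturbation sketch is loose: the mere existence of a nspc.f.d.\ curve from $0$ to $\gamma(T)$ containing a timelike arc does not by itself contradict $\gamma(T)\in\partial_{U}J^{+}(0,U)$ without a precise statement that such a point must lie in the interior; your fallback to \cite{g6} is the clean route and is what the paper does.
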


\begin{proof}
It is enough to notice that the frame (\ref{FrPr}) defining $(\tilde{U},%
\tilde{H},\tilde{g})$ is given in the normal form for Martinet
sub-Lorentzian structures of Hamiltonian type considered in \cite{g8}. Thus,
using \cite{g8}, we know that the abnormal curve for $(\tilde{H},\tilde{g})$
initiating at the origin is geometrically optimal. Now proposition above
applies. The second part is clear - cf. \cite{g6}.
\end{proof}

In particular this proves proposition 1.2. As it was mentioned above,
abnormal timelike curves always satisfy necessary conditions for optimality,
so the presented method may prove to be useful in applications.

\subsection{Some estimates in the flat case.}

Recall that the flat Engel sub-Lorentzian structure is, by definition, the
structure defined by an orthonormal frame $X=\frac{\partial }{\partial x}+%
\frac{1}{2}y\frac{\partial }{\partial z}+\frac{1}{2}y^{2}\frac{\partial }{%
\partial w}$, $Y=\frac{\partial }{\partial y}-\frac{1}{2}x\frac{\partial }{%
\partial z}-\frac{1}{2}xy\frac{\partial }{\partial w}$ where $X$ is a time
orientation. We see that $X(x,y,z,0)$, $Y(x,y,z,0)$ determine, in the space $%
\mathbb{R}^{3}(x,y,z)$, the Heisenberg sub-Lorentzian metric considered in 
\cite{g7}, while $X(x,y,0,w)$, $Y(x,y,0,w)$ stipulate, in the space $\mathbb{%
R}^{3}(x,y,w)$, the flat Martinet sub-Lorentzian structure investigated in 
\cite{g8}. This leads us to considering the following Cauchy problems.
Similarly as in the mentioned papers let $\Gamma _{1}$ be the hyperplane $%
\left\{ y=x\right\} $, and $\Gamma _{2}$ be the hyperplane $\left\{
y=-x\right\} $. Consider the following Cauchy problems (cf. \cite{g7}): 
\begin{equation}
(X-Y)(\eta )=0\text{, \ \ }\eta _{|\Gamma _{1}}=z\text{,}  \label{Ca1}
\end{equation}%
\begin{equation}
(X-Y)(\eta )=0\text{, \ \ }\eta _{|\Gamma _{2}}=-z\text{.}  \label{Ca2}
\end{equation}%
Their solutions are respectively \newline
$\hat{f}_{1}(x,y,z,w)=z-\frac{1}{4}(x^{2}-y^{2})$, \newline
$\hat{f}_{2}(x,y,z,w)=-z-\frac{1}{4}(x^{2}-y^{2})$. \newline
The horizontal gradients are computed to be \newline
$\nabla _{H}\hat{f}_{1}=\frac{1}{2}(x-y)(X-Y)$, \newline
$\nabla _{H}\hat{f}_{2}=\frac{1}{2}(x+y)(X+Y)$. \newline
Note that $\nabla _{H}\hat{f}_{i}$ is null f.d. on $\left\{ \left\vert
y\right\vert <x\right\} $, $i=1,2$.

Next consider the following Cauchy problems (cf. \cite{g8}): 
\begin{equation}
(X-Y)(\eta )=0\text{, \ \ }\eta _{|\Gamma _{1}}=w\text{,}  \label{Ca3}
\end{equation}%
\begin{equation}
(X+Y)(\eta )=0\text{, \ \ }\eta _{|\Gamma _{2}}=w\text{,}  \label{Ca4}
\end{equation}%
\begin{equation}
(X+Y)(\eta )=0\text{, \ \ }\eta _{|y=0}=-w\text{,}  \label{Ca5}
\end{equation}%
\begin{equation}
(X-Y)(\eta )=0\text{, \ \ }\eta _{|y=0}=-w\text{.}  \label{Ca6}
\end{equation}%
Their solutions are: \newline
$\hat{g}_{1}(x,y,z,w)=w-\frac{1}{16}(x^{2}-y^{2})(x+3y)$, \newline
$\hat{g}_{2}(x,y,z,w)=w-\frac{1}{16}(x^{2}-y^{2})(x-3y)$, \newline
$\hat{g}_{3}(x,y,z,w)=-w-\frac{1}{4}(xy^{2}-y^{3})$, \newline
$\hat{g}_{4}(x,y,z,w)=-w-\frac{1}{4}(xy^{2}+y^{3})$, \newline
respectively. Their horizontal gradients are:\newline
$\nabla _{H}\hat{g}_{1}=\frac{3}{16}(x-y)(x+3y)(X-Y)$, \newline
$\nabla _{H}\hat{g}_{2}=\frac{3}{16}(x+y)(x-3y)(X+Y)$, \newline
$\nabla _{H}\hat{g}_{3}=\frac{3}{4}y^{2}(X+Y)$, \newline
$\nabla _{H}\hat{g}_{4}=\frac{3}{4}y^{2}(X-Y)$.\newline
It is easy to see \cite{g8} that all $\nabla _{H}\hat{g}_{i}$ are null
fields, and $\nabla _{H}\hat{g}_{1}$ is f.d. on the set $\left\{ -\frac{1}{3}%
x<y<x\text{, }x>0\right\} $, $\nabla _{H}\hat{g}_{2}$ is f.d. on $\left\{
-x<y<\frac{1}{3}x\text{, }x>0\right\} $, finally $\nabla _{H}\hat{g}_{3}$
and $\nabla _{H}\hat{g}_{4}$ are f.d. on $\left\{ y\neq 0\text{, }%
x>0\right\} $. Let us define the following subsets of $\mathbb{R}^{4}$: 
\begin{equation*}
A_{11}=\left\{ \hat{f}_{1}\leq 0\right\} \cap \left\{ \hat{g}_{1}\leq
0\right\} \cap \left\{ x\geq 0\text{, }y\geq 0\text{, }z\geq 0\text{, }w\geq
0\right\} \text{,}
\end{equation*}%
\begin{equation*}
A_{12}=\left\{ \hat{f}_{1}\leq 0\right\} \cap \left\{ \hat{g}_{2}\leq
0\right\} \cap \left\{ x\geq 0\text{, }y\leq 0\text{, }z\geq 0\text{, }w\geq
0\right\} \text{,}
\end{equation*}%
\begin{equation*}
A_{13}=\left\{ \hat{f}_{1}\leq 0\right\} \cap \left\{ \hat{g}_{3}\leq
0\right\} \cap \left\{ x\geq 0\text{, }y\geq 0\text{, }z\geq 0\text{, }w\leq
0\right\} \text{,}
\end{equation*}%
\begin{equation*}
A_{14}=\left\{ \hat{f}_{1}\leq 0\right\} \cap \left\{ \hat{g}_{4}\leq
0\right\} \cap \left\{ x\geq 0\text{, }y\leq 0\text{, }z\geq 0\text{, }w\leq
0\right\} \text{,}
\end{equation*}%
\begin{equation*}
A_{21}=\left\{ \hat{f}_{2}\leq 0\right\} \cap \left\{ \hat{g}_{1}\leq
0\right\} \cap \left\{ x\geq 0\text{, }y\geq 0\text{, }z\leq 0\text{, }w\geq
0\right\} \text{,}
\end{equation*}%
\begin{equation*}
A_{22}=\left\{ \hat{f}_{2}\leq 0\right\} \cap \left\{ \hat{g}_{2}\leq
0\right\} \cap \left\{ x\geq 0\text{, }y\leq 0\text{, }z\leq 0\text{, }w\geq
0\right\} \text{,}
\end{equation*}%
\begin{equation*}
A_{23}=\left\{ \hat{f}_{2}\leq 0\right\} \cap \left\{ \hat{g}_{3}\leq
0\right\} \cap \left\{ x\geq 0\text{, }y\geq 0\text{, }z\leq 0\text{, }w\leq
0\right\} \text{,}
\end{equation*}%
\begin{equation*}
A_{24}=\left\{ \hat{f}_{2}\leq 0\right\} \cap \left\{ \hat{g}_{4}\leq
0\right\} \cap \left\{ x\geq 0\text{, }y\leq 0\text{, }z\leq 0\text{, }w\leq
0\right\} \text{.}
\end{equation*}%
Using corollary 3.1, the remark on horizontal gradients from the beginning
of this section, and computations made in \cite{g7}, \cite{g8} we obtain

\begin{proposition}
Let $J^{+}(0)$ be the reachable set from zero for the flat Engel structure.
Then 
\begin{equation*}
J^{+}(0)\subset \tbigcup\nolimits_{i=1,2}\tbigcup\nolimits_{j=1,...,4}A_{ij}%
\text{.}
\end{equation*}
\end{proposition}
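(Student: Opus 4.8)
The plan is to exhibit each set $A_{ij}$ as an intersection of sublevel sets of smooth functions whose horizontal gradients are null and future directed on the relevant octant, and then to invoke the elementary monotonicity principle recalled at the start of Section 3: if $\gamma:[0,T]\longrightarrow U$ is nonspacelike future directed and $\nabla_H f$ is null future directed along $\gamma$, then $t\longmapsto f(\gamma(t))$ is nonincreasing, hence $f(\gamma(T))\le f(\gamma(0))$. Since every nonspacelike future directed curve issuing from the origin starts at a point where $\hat f_i=\hat g_j=x=y=z=w=0$, each such inequality $f\le 0$ is automatically propagated, provided the gradient retains the null future-directed sign all along the curve.

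The first step is to record, exactly as listed in the excerpt, the four functions $\hat f_1,\hat f_2$ and $\hat g_1,\dots,\hat g_4$ together with their horizontal gradients $\nabla_H\hat f_i=\tfrac12(x\mp y)(X\mp Y)$ and $\nabla_H\hat g_1=\tfrac{3}{16}(x-y)(x+3y)(X-Y)$, etc.; these are null vector fields, and one checks the stated sign of future-directedness on the indicated wedges. Combined with the observation that any nonspacelike future directed curve from $0$ must have $x\ge 0$ throughout (this follows from corollary 3.1, or simply from $\dot x=\langle\dot\gamma,X\rangle-\langle\dot\gamma,Y\rangle\cdot(\text{terms})$ and the fact that $X$ is the time orientation so $g(\dot\gamma,X)<0$ forces $x$ nondecreasing in the flat model), one also gets $z$ and $w$ nonnegative or nonpositive according to which component of $\{x>0\}\setminus\{y=0\}$ the curve lives in, because $\dot z$ and $\dot w$ have fixed sign there in the flat structure. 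The second step is to partition $\{x\ge0\}$ into the eight regions labelled by the signs of $y$, $z$, $w$ appearing in the definitions of $A_{ij}$, and to argue that a nonspacelike future directed curve from $0$ that enters a given region satisfies in that region all the defining inequalities of the corresponding $A_{ij}$: the $\hat f$-inequality comes from the Heisenberg computation in \cite{g7}, the $\hat g$-inequality from the Martinet computation in \cite{g8}, and the coordinate-sign inequalities from the monotonicity of $x,z,w$ just discussed.

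The third step handles the transitions between regions. A curve from $0$ need not stay in one octant; but the key point is that the functions $\hat f_i$ and $\hat g_j$ vanish on the very hyperplanes ($\{y=x\}$, $\{y=-x\}$, $\{y=0\}$) across which the sign of the relevant gradient changes — this is why those particular Cauchy data were chosen in \cite{g7}, \cite{g8}. Thus when $\gamma$ crosses from one wedge into an adjacent one, the controlling function passes through (at most) the value it had, and one can re-start the monotonicity argument with the next function; a short gluing argument, identical in spirit to the ones in \cite{g8}, shows the curve still lands in $\bigcup_{i,j}A_{ij}$. Since the union in the statement is precisely over $i=1,2$ and $j=1,\dots,4$, combining the per-region inclusions yields $J^+(0)\subset\bigcup_{i=1,2}\bigcup_{j=1,\dots,4}A_{ij}$.

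The main obstacle I anticipate is the bookkeeping at the crossings: making sure that on each boundary hyperplane the function that controlled the previous wedge is $\le 0$ exactly where the function controlling the next wedge is still $\le 0$, so that no point is missed and the eight-piece cover genuinely closes up. This is a finite but delicate case check; its substance is precisely the content already verified in \cite{g7} and \cite{g8} for the three-dimensional Heisenberg and Martinet models, and here one only needs to see that the $z$- and $w$-coordinates decouple enough in the flat Engel structure (because $X,Y$ restricted to $\{w=0\}$ give Heisenberg and restricted to $\{z=0\}$ give Martinet) for those two analyses to be run simultaneously. Everything else — writing down the gradients, checking nullity, checking the signs on the wedges — is routine computation of the type already displayed just before the proposition.
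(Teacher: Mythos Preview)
Your approach is essentially the paper's: the paper's entire proof is the single sentence ``Using corollary 3.1, the remark on horizontal gradients from the beginning of this section, and computations made in \cite{g7}, \cite{g8} we obtain\ldots''. That is exactly your plan --- monotonicity of the $\hat f_i,\hat g_j$ along nspc.f.d.\ curves, combined with the Heisenberg and Martinet reachable-set descriptions already established in the cited papers, applied via the two projections $(x,y,z,w)\mapsto(x,y,z)$ and $(x,y,z,w)\mapsto(x,y,w)$.

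One correction, however: your claim in Step~2 that $\dot z$ and $\dot w$ have fixed sign on each component of $\{x>0\}\setminus\{y=0\}$ is false in the flat structure. Writing $\dot\gamma=aX+bY$ with $a\geq|b|$, one has $\dot z=\tfrac12(ay-bx)$ and $\dot w=y\dot z$, and neither has a sign determined solely by $\mathrm{sgn}\,y$ (e.g.\ $y=0.1$, $x=1$, $a=1$, $b=0.5$ gives $\dot z<0$ with $y>0$). Fortunately you do not need this: the sign conditions on $y,z,w$ in the definition of the $A_{ij}$ are not assertions to be proved along the curve --- they simply label which octant the \emph{endpoint} lies in, and thereby select which pair $(\hat f_i,\hat g_j)$ must be shown $\leq 0$ there. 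That inequality is then precisely the content of \cite{g7} (for $\hat f_i$, via the Heisenberg projection) and \cite{g8} (for $\hat g_j$, via the Martinet projection), exactly as you say at the end of your proposal. So drop the monotonicity-of-$z,w$ claim and the argument goes through, matching the paper.
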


\begin{remark}
It should be mentioned that our flat case was treated by Krener and
Schattler in \cite{Krener}. If $Z_{1},Z_{2},Z_{3}$ are vector fields, denote
by $Z_{1}Z_{2}Z_{3}$ the curve starting from the origin which is a
concatenation of a segment of the trajectory of $Z_{1}$ starting from the
origin with a segment of a trajectory of $Z_{2}$ and a segment of a
trajectory of $Z_{3}$. The authors observed that geometrically optimal
curves are the following concatenations: $\left( X+Y\right) X\left(
X+Y\right) $, $\left( X+Y\right) X\left( X-Y\right) $, $\left( X-Y\right)
X\left( X-Y\right) $, $\allowbreak \left( X-Y\right) X\left( X+Y\right) $,
and also $\left( X+Y\right) \left( X-Y\right) \left( X+Y\right) $, $\left(
X-Y\right) \left( X+Y\right) \left( X-Y\right) $ where in the last series
the following restriction on time applies: the time along the intermediate
arc is greater than or equal to the sum of times along the first and the
last arc.
\end{remark}

\subsection{Some estimates in the general case.}

Now consider a structure generated by the frame $X,Y$ as in (\ref{EngSpec}),
i.e. $\varphi =\varphi (x,y,w)$ and $\psi _{2}=\psi _{2}(x,y,w)$ but
additionally assume that all objects are real analytic. Fix a normal
neighbourhood $U$ of the origin and consider in $U$ the Cauchy problems (\ref%
{Ca1}),..., (\ref{Ca6}) (where $X,Y$ are as in (\ref{EngSpec})). Denote
respective solutions by $f_{i}$, $i=1,2$, and $g_{j}$, $j=1,...,4$. Again
according to \cite{g7}, \cite{g8} $f_{i}=\hat{f}_{i}+O(r^{3})$, $i=1,2$, and 
$g_{j}=\hat{g}_{j}+O(r^{4})$, $j=1,...,4$, where $r=\sqrt{%
x^{2}+y^{2}+z^{2}+w^{2}}$. Also the horizontal gradients keep the suitable
signs, provided $U$ is sufficiently small. Now take a semi-analytic set $%
\Sigma $ from theorem 1.2 in \cite{g8}. Considering $\Sigma $ as a subset of 
$\mathbb{R}^{4}$, $\Sigma $ becomes a set of dimension $3$, and hence $U\cap
\left\{ x\geq 0\right\} \backslash \Sigma $ has two connected components $%
\Sigma ^{+}$ and $\Sigma ^{-}$. Let us agree that $\Sigma ^{+}$ contains the
trajectory of $X+Y$ starting from $0$. Now, if we define 
\begin{equation*}
A_{11}=\left\{ f_{1}\leq 0\right\} \cap \left\{ g_{1}\leq 0\right\} \cap
\Sigma ^{+}\cap U\cap \left\{ x\geq 0\text{, }z\geq 0\text{, }w\geq
0\right\} \text{,}
\end{equation*}%
\begin{equation*}
A_{12}=\left\{ f_{1}\leq 0\right\} \cap \left\{ g_{2}\leq 0\right\} \cap
\Sigma ^{-}\cap U\cap \left\{ x\geq 0\text{, }z\geq 0\text{, }w\geq
0\right\} \text{,}
\end{equation*}%
\begin{equation*}
A_{13}=\left\{ f_{1}\leq 0\right\} \cap \left\{ g_{3}\leq 0\right\} \cap
U\cap \left\{ x\geq 0\text{, }y\geq 0\text{, }z\geq 0\text{, }w\leq
0\right\} \text{,}
\end{equation*}%
\begin{equation*}
A_{14}=\left\{ f_{1}\leq 0\right\} \cap \left\{ g_{4}\leq 0\right\} \cap
U\cap \left\{ x\geq 0\text{, }y\leq 0\text{, }z\geq 0\text{, }w\leq
0\right\} \text{,}
\end{equation*}%
\begin{equation*}
A_{21}=\left\{ f_{2}\leq 0\right\} \cap \left\{ g_{1}\leq 0\right\} \cap
\Sigma ^{+}\cap U\cap \left\{ x\geq 0\text{, }z\leq 0\text{, }w\geq
0\right\} \text{,}
\end{equation*}%
\begin{equation*}
A_{22}=\left\{ f_{2}\leq 0\right\} \cap \left\{ g_{2}\leq 0\right\} \cap
\Sigma ^{-}\cap U\cap \left\{ x\geq 0\text{, }z\leq 0\text{, }w\geq
0\right\} \text{,}
\end{equation*}%
\begin{equation*}
A_{23}=\left\{ f_{2}\leq 0\right\} \cap \left\{ g_{3}\leq 0\right\} \cap
U\cap \left\{ x\geq 0\text{, }y\geq 0\text{, }z\leq 0\text{, }w\leq
0\right\} \text{,}
\end{equation*}%
\begin{equation*}
A_{24}=\left\{ f_{2}\leq 0\right\} \cap \left\{ g_{4}\leq 0\right\} \cap
U\cap \left\{ x\geq 0\text{, }y\leq 0\text{, }z\leq 0\text{, }w\leq
0\right\} \text{,}
\end{equation*}%
then again using corollary 3.1 and computations from \cite{g7}, \cite{g8} we
get

\begin{proposition}
Let $J^{+}(0,U)$ be the reachable set from zero for analytic Engel structure
as in (\ref{EngSpec}). Then 
\begin{equation*}
J^{+}(0,U)\subset
\tbigcup\nolimits_{i=1,2}\tbigcup\nolimits_{j=1,...,4}A_{ij}\text{.}
\end{equation*}
\end{proposition}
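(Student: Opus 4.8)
The plan is to reduce the four-dimensional problem to the two already-solved three-dimensional ones by projecting out one coordinate at a time, exactly as the flat case in Proposition 3.2 was handled but now keeping track of the analytic corrections. Concretely, I would first observe that the reachable set $J^{+}(0,U)$ is, by the time-orientation convention, contained in $\{x\ge 0\}$, since $x$ is the ``timelike coordinate'' of $X$ and along any nspc.f.d. curve $\dot x>0$ near the origin (the leading term of $g(\dot\gamma,X)$ is $-\dot x$). So the bulk of the argument is to show that each of the eight ``defining inequalities'' $\{f_i\le 0\}$ and $\{g_j\le 0\}$, together with the appropriate sign conditions on $z$, $w$, and the separating hypersurface $\Sigma^{\pm}$, is valid on $J^{+}(0,U)$; the conclusion is then just the intersection of these.

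The key mechanism is the remark recalled at the start of Section 3: if $h$ is a smooth function with $\nabla_H h$ null future directed on $U$, then $t\mapsto h(\gamma(t))$ is nonincreasing along any nspc.f.d. curve $\gamma$, so $h(\gamma(t))\le h(\gamma(0))=h(0)$. Thus, for each Cauchy problem \eqref{Ca1}--\eqref{Ca6}, I would check that its solution $f_i$ or $g_j$ vanishes at the origin and that its horizontal gradient, which by the analytic perturbation estimates $f_i=\hat f_i+O(r^3)$, $g_j=\hat g_j+O(r^4)$ keeps the same leading form as in the flat computation, is null future directed on the relevant region of $\{x\ge 0\}$ once $U$ is small enough. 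This gives $\{f_i\le 0\}$ and $\{g_j\le 0\}$ as it stands. The sign conditions $z\ge 0$ (resp.\ $z\le 0$), $w\ge 0$ (resp.\ $w\le 0$) come from the same device applied to the coordinate functions themselves: projecting via $p(x,y,z,w)=(x,y,w)$ to the Martinet structure \eqref{FrPr} (which by Corollary 3.1's argument is the normal form from \cite{g8}) and via $(x,y,z,w)\mapsto(x,y,z)$ to the Heisenberg structure of \cite{g7}, one reads off from those papers that $\pm w$ and $\pm z$ are monotone along nspc.f.d.\ curves in the respective quadrants; and the dichotomy $\Sigma^{+}$ versus $\Sigma^{-}$ together with $y\ge 0$ versus $y\le 0$ is dictated by which of the four Martinet functions $g_j$ is actually future directed in which sub-cone, precisely as in Theorem 1.2 of \cite{g8}. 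Partitioning $\{x\ge 0\}$ according to the sign of $z$, the sign of $w$, and the side of $\Sigma$ then forces $J^{+}(0,U)$ into the union $\bigcup_{i,j}A_{ij}$.

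The main obstacle I anticipate is not any single inequality but the bookkeeping needed to glue the Heisenberg-type estimates (controlling $z$ and $f_i$) with the Martinet-type estimates (controlling $w$, $g_j$, and $\Sigma^{\pm}$) consistently across the boundary hypersurfaces $\{z=0\}$, $\{w=0\}$, $\{y=0\}$, and $\Sigma$: one must make sure that the same small normal neighbourhood $U$ works for all eight gradient sign conditions simultaneously, and that the semi-analytic set $\Sigma$ from \cite{g8} really does separate $U\cap\{x\ge 0\}$ into the two pieces claimed, with the trajectory of $X+Y$ landing in $\Sigma^{+}$. Since $\Sigma$ has codimension one in the three-dimensional Martinet picture, viewing it in $\mathbb{R}^4$ makes it codimension one still, so $U\cap\{x\ge 0\}\setminus\Sigma$ has exactly two components as asserted; this is where analyticity is genuinely used, to apply the curve-selection / semi-analytic structure results from \cite{g8} verbatim. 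Once this is in place, the proof is a routine intersection argument and I would close it by citing \cite{g7}, \cite{g8} for the component inclusions and invoking Corollary 3.1 only to note that the abnormal curve lies on $\partial_U J^{+}(0,U)$, which is consistent with (but not needed for) the stated containment.
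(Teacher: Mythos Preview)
Your approach is essentially the same as the paper's: the paper's own proof is a one-line reference to Corollary~3.1 together with the computations in \cite{g7} and \cite{g8}, and your proposal simply unpacks what those citations mean---the horizontal-gradient monotonicity mechanism applied to the Cauchy-problem solutions $f_i$, $g_j$, combined with the Martinet projection to import the $\Sigma^\pm$ and $w$-sign constraints from \cite{g8}.

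One small correction is worth flagging. The projection $(x,y,z,w)\mapsto(x,y,z)$ does \emph{not} in general produce a well-defined three-dimensional structure, because $\varphi$ and $\psi_1$ are allowed to depend on $w$; only the projection $p(x,y,z,w)=(x,y,w)$ to the Martinet picture is guaranteed to work under the hypotheses $\varphi=\varphi(x,y,w)$, $\psi_2=\psi_2(x,y,w)$. Hence the $f_i$-inequalities and the $z$-sign dichotomy are not obtained by a Heisenberg projection but by solving the Cauchy problems \eqref{Ca1}, \eqref{Ca2} directly in four dimensions and checking, via $f_i=\hat f_i+O(r^3)$, that the gradient-sign arguments of \cite{g7} go through verbatim on a small enough $U$. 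This is a cosmetic fix---your gradient-monotonicity paragraph already handles it correctly---but the sentence about ``projecting to the Heisenberg structure'' should be dropped. Your remark that Corollary~3.1 is not strictly required for the containment is defensible: what is really used is the Martinet projection together with the reachable-set description of \cite{g8}, of which Corollary~3.1 is itself a consequence.
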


Note that if we do not assume $\varphi =\varphi (x,y,w)$, $\psi _{2}=\psi
_{2}(x,y,w)$ (i.e. we are not sure about the geometric optimality of the
abnormal curve initiating at zero) then we know that 
\begin{equation*}
J^{+}(0,U)\subset \left( \left\{ f_{1}\leq 0\text{, }x\geq 0\text{, }z\geq
0\right\} \cup \left\{ f_{2}\leq 0\text{, }x\geq 0\text{, }z\leq 0\right\}
\right) \cap U\text{.}
\end{equation*}

\subsection{Final remarks.}

First let us notice that for all Engel sub-Lorentzian structures the
half-lines $\left\{ y=\pm x\text{, }z=w=0\right\} $ (in coordinates from
theorem 1.1) are geometrically optimal.\ Indeed, this follows from theorem
proved in \cite{g6} and asserting that null f.d. Hamiltonian geodesics are
geometrically optimal.

Secondly, in all cases treated in proposition 3.1, the curves $\left(
X-Y\right) \left( X+Y\right) $, $\left( X+Y\right) \left( X-Y\right) $, $%
X\left( X-Y\right) $, $X\left( X+Y\right) $ (we use here the notation from
remark 3.1) are geometrically optimal - this follows from proposition 3.1
and the properties of Martinet sub-Lorentzian structures described in \cite%
{g8}.

\bigskip

\begin{acknowledgement}
This work was partially supported by the Polish Ministry of Research and
Higher Education, grant NN201 607540.
\end{acknowledgement}

\textsl{Faculty of Mathematics and Science, Cardinal Stefan Wyszy\'{n}ski
University, ul. Dewajtis 5, 01-815 Waszawa, Poland}

\end{document}